\crefname{hypothesis}{Hypothesis}{Hypotheses}
\title{Improving efficiency of parallel across the method spectral deferred corrections\thanks{Submitted to the editors DATE.
\funding{This project has received funding from the European High-Performance Computing Joint Undertaking (JU) under grant agreement No 955701. The JU receives support from the European Union's Horizon 2020 research and innovation programme and Belgium, France, Germany, and Switzerland. This project also received funding from the German Federal Ministry of Education and Research (BMBF) grant 16HPC048.
This project has also received funding from the German Federal Ministry of Education and Research (BMBF) under grant 16ME0679K. Supported by the European Union - NextGenerationEU.}}}
\author{Gayatri \v{C}aklovi\'c\thanks{Karlsruhe Institute of Technology
  (\email{gayatri.caklovic@kit.edu}).}
\and Thibaut Lunet\thanks{Chair Computational Mathematics, Institute of Mathematics, Hamburg University of Technology, 21073 Hamburg, Germany
  (\email{thibaut.lunet@tuhh.de}, \email{sebastian.goetschel@tuhh.de}, \email{ruprecht@tuhh.de}).}
\and Sebastian Götschel\footnotemark[3]
\and Daniel Ruprecht\footnotemark[3]}
\DeclareMathOperator{\diag}{diag}
\newcommand{\matr}[1]{\mathbf{#1}}
\newcommand{\vect}[1]{\boldsymbol{#1}}
\newcommand{\dt}{\Delta t}
\newcommand{\Imat}{\matr{I}}
\newcommand{\Qmat}{\matr{Q}}
\newcommand{\QDelta}{\matr{Q}_\Delta}
\newcommand{\RadauRight}{Radau-Right\xspace}
\newcommand{\Lobatto}{Lobatto\xspace}
\newcommand{\MINSRNS}{\texttt{MIN-SR-NS}\xspace}
\newcommand{\MINSRS}{\texttt{MIN-SR-S}\xspace}
\newcommand{\MINSRFLEX}{\texttt{MIN-SR-FLEX}\xspace}
\newcommand{\HOUWENSOMMEIJER}{\texttt{VDHS}\xspace}
\newcommand{\LU}{\texttt{LU}\xspace}
\newcommand{\IE}{\texttt{IE}\xspace}
\newcommand{\EE}{\texttt{EE}\xspace}
\newcommand{\IEpar}{\texttt{IEpar}\xspace}
\newcommand{\PIC}{\texttt{PIC}\xspace}
\newcommand{\I}{\mathbf{I}}
\newcommand{\Q}{\mathbf{Q}}
\newcommand{\Qd}{\Q_\Delta}
\newcommand{\Qdm}[1]{\Q_{\Delta, #1}}
\newcommand{\Qdi}{\Q_\Delta^{-1}}
\newcommand{\K}{\matr{K}}
\newcommand{\KS}{\K_{\rm{S}}}
\newcommand{\KNS}{\K_{\rm{NS}}}
\newcommand{\qdm}[1]{Q_{\Delta, #1}}
\newcommand{\eg}{\textit{e.g.},~}
\newcommand{\ie}{\textit{i.e.},~}
\newcommand{\C}{\mathbb{C}}
\newcommand{\R}{\mathbb{R}}
\newcommand{\node}{\tau}
\newcommand{\nodem}{\node_m}
\newcommand{\dl}{{\dt \lambda}}
\newcommand{\x}{\vect{x}}
\renewcommand{\u}{\vect{u}}
\newcommand{\uk}{\u^{k}}
\newcommand{\ukk}{\u^{k+1}}
\newcommand{\e}{\vect{e}}
\newcommand{\ek}{\e^{k}}
\newcommand{\ekk}{\e^{k + 1}}
\newcommand{\one}{\mathds{1}}
\definecolor{TLColor}{RGB}{175,0,175}
\definecolor{GCColor}{RGB}{17,149,0}
\definecolor{SGColor}{RGB}{54,0,189}
\definecolor{DRColor}{RGB}{0,148,189}
\definecolor{ModifColor}{RGB}{203,82,49}
\begin{document}

\maketitle

% REQUIRED
\begin{abstract}
Parallel-across-the method time integration can provide small scale parallelism when solving initial value problems.
Spectral deferred corrections (SDC) with a diagonal sweeper, closely related to iterated Runge-Kutta methods proposed by Van der Houwen and Sommeijer, can use a number of threads equal to the number of quadrature nodes in the underlying collocation method.
However, convergence speed, efficiency and stability depend critically on the coefficients of the used SDC preconditioner.
Previous approaches used numerical optimization to find good diagonal coefficients.
Instead, we propose an approach that allows to find optimal diagonal coefficients analytically.
We show that the resulting parallel SDC methods provide stability domains and convergence order very similar to those of well established serial SDC variants.
Using a model for computational cost that assumes 80\% efficiency of an implementation of parallel SDC, we show that our variants are competitive with serial SDC and previously published parallel SDC coefficients as well as Picard iteration, a fourth-order explicit and a fourth-order implicit diagonally implicit Runge-Kutta method.
\end{abstract}

% REQUIRED
\begin{keywords}
Parallel in Time (PinT), Spectral Deferred Correction, parallel across the method,
stiff and non-stiff problems, iterated Runge-Kutta methods
\end{keywords}

% REQUIRED
\begin{MSCcodes}
65R20, 65L04, 65L05, 65L20
\end{MSCcodes}

\tableofcontents
\section{Introduction}
Numerical methods to solve initial-value problems for nonlinear systems of ordinary differential equations (ODEs)
\begin{equation}
    \label{eq:ODEsys}
    \frac{du(t)}{dt} = f(t, u(t)), \quad
    t \in [0, T], \quad u(0) = u_0 \in \R^{N_{\text{dof}}},
\end{equation}
are of great importance for many domain sciences.
For ODEs arising from spatial discretization of a partial differential equation in a method-of-lines approach, the number of degrees of freedom $N_\text{dof}$ is often very large.
Hence, developing efficient methods to minimize time-to-solution and
computational cost becomes important.
Because of the large number of compute cores in modern computers,
leveraging concurrency is one of the most effective
ways to reduce solution times.

A widely used class of methods for solving~\eqref{eq:ODEsys} are Runge--Kutta Methods (RKM), usually represented by Butcher tables of the form
\begin{equation}\notag
    \begin{array}
    {c|c}
    \vect{c} & \matr{A} \\
    \hline\\[-1em]
    & \vect{b}^\top
\end{array}
\end{equation}
with $\matr{A} \in \R^{s \times s}$, $\vect{b}, \vect{c} \in \R^s$, and $s$ the number of stages.
The Butcher table is a concise way to represent the update
from $t_0$ to $t_0 + \dt$, which, for a scalar ODE, reads
\begin{align}
    \text{solve:} \quad& u(\vect{t}_s) - \dt \matr{A}
        f(\vect{t}_s, u(\vect{t}_s)) = u(t_0\vect{\one}),
        \quad \vect{t}_s = t_0\vect{\one} + \dt\vect{c},
    \label{eq:RKM_Solve}\\
    \text{update:} \quad& u(t_0+\dt)
        \approx u(t_0) + \dt \vect{b}^\top f(\vect{t}_s, u(\vect{t}_s)),
    \label{eq:RKM_Update}
\end{align}
where $\vect{\one}  = (1,\dots,1)^\top$ is the vector with unit entries and
$u(\vect{t}_s)$ is the vector containing solutions at all times in
$\vect{t}_s$.\footnote{For simplicity and to avoid any complex notation using tensor products, we describe time-integration here only from the scalar perspective.
}
For implicit Runge--Kutta methods (IRK), where  the matrix $\matr{A}$ is dense, computing the stages~\eqref{eq:RKM_Solve} requires solving a system of size $sN_\text{dof} \times sN_\text{dof}$.

A popular class of IRK are collocation methods~\cite[Sec~II.7]{hairer1993nonStiff}
based on Gaussian quadrature.
Collocation methods are attractive since they can be of very high order and are A-stable or L-stable depending on the used type of quadrature nodes.
However, since they have a dense matrix $\matr{A}$, they are computationally expensive.
Diagonally implicit Runge-Kutta methods (DIRK) with a lower-triangular $\matr{A}$ are computationally cheaper, because the implicit systems for the stages can be solved independently.
However, compared to collocation, DIRK methods have lower order for an equal number of stages and less favorable stability properties.

\subsection{Parallelism across the method}
First ideas to exploit parallelism in the numerical solution of ODEs emerged in the 1960s~\cite{nievergelt1964parallel}.
Given the massive increase in concurrency in modern high-performance computing,
the last two decades have seen a dramatic rise in interest in parallel-in-time methods,
see the recent reviews by Ong and Spiteri~\cite{OngEtAl2020} or Gander~\cite{Gander2015_Review}.
In the terminology established by Gear~\cite{Gear1988}, we focus on ``parallelism across the method'' where a time integration scheme is designed such that computations within a single time step can be performed in parallel.
By contrast, ``parallel across the steps'' methods like Parareal~\cite{LionsEtAl2001}, PFASST~\cite{EmmettMinion2012} or MGRIT~\cite{FalgoutEtAl2014_MGRIT} parallelize across multiple time steps.
Revisionist integral deferred corrections (RIDC) are a hybrid that compute a small number of time steps simultaneously~\cite{OngEtAl2016}.
While parallelism across the method is more limited in the number of cores it can employ, it often provides better parallel efficiency and is easier to implement than parallelization across the steps.

Note that time parallelization is meant to be employed in combination with spatial parallelization and not instead of it.
While we focus only on temporal parallelization here, parallel SDC has been shown to be capable of extending scaling beyond the saturation of pure spatial parallelization~\cite{freese2024parallel}.
It is in important to keep in mind that parallelization in space and in time are multiplicative.
That is, if a code's spatial parallelization saturates at, say, 1000 cores, adding parallelization in time with four cores would allow to use up to $4 \times 1000 = 4000$ cores.

For collocation methods, parallel across the method variants exist based on diagonalization of $\matr{A}$~
\cite{Butcher1976,Lie1987,NorsettEtAl1989,OREL1993241,speck2018parallelizing} or based on using a specific GMRES preconditioning~\cite{LevequeEtAl2023,Pazner2017700,MunchEtAl2023}.
However, those approaches can introduce significant overhead and, in particular for large problems, struggle to outperform sequential time-stepping~\cite{GayaThesis}.
Runge-Kutta methods with parallelism across stages or blocks of stages, that is with diagonal or lower block diagonal Butcher matrices, have also been considered but the resulting schemes lack stability and are of lower order than their sequential counterparts~\cite{IserlesNorsett1990, Jackson1991, JacksonEtAl1995, PETCU2001, SolodushkinEtAl2016}.

Spectral deferred corrections (SDC), introduced in 2000 by Dutt et al.~\cite{dutt2000spectral}, are an iterative approach for computing the stages of a collocation method by performing multiple ``sweeps'' through the quadrature nodes with a lower order method.
SDC can also be interpreted as a preconditioned  fixed-point or Richardson iteration~\cite{huang2006accelerating,qu2016numerical}.
In standard SDC, the preconditioner applied to~\eqref{eq:RKM_Solve} corresponds to a lower triangular
matrix that is inverted by forward substitution and results in a sweep-like type of iterations.
Speck~\cite{speck2018parallelizing} suggests the use of diagonal preconditioner instead, which allows to parallelize the iteration update for the stages.
However, depending on the entries of the diagonal preconditioner, convergence of parallel SDC can be much slower than convergence of standard SDC.
Unbeknownst to the author, this idea had been proposed before by van der Houwen \& Sommeijer~\cite{houwen1991iterated}, but in the context of iterated IRK methods instead of SDC.

Links between SDC and RKM are well established~\cite{christlieb2009comments}.
In addition, we show in \S\ref{sec:SDC} that SDC is actually equivalent to a specific iterated
IRK method by Houwen and Sommeijer~\cite{houwen1991iterated}
that would use a lower-triangular preconditioner.

IRK have been widely studied in the 1990s and have been shown to preserve important
attributes of the underlying collocation method such as order and stability~\cite{Burrage1993}.
They form the basis of ``Parallel iterated RK across the steps'' (PIRKAS) methods~\cite{RauberEtAl1996, VANDERHOUWEN1995309, VanderHouwen1990}, which can be written as Block Gauss-Seidel SDC (BGS-SDC) methods~\cite{baumann2042pursing, guibert2007parallel}.
Both PIRKAS and BGS-SDC methods have been combined with parallelism across the method using diagonal preconditioning~\cite{houwen1991iterated} to form the Parallel Diagonal-implicitly Iterated RK Across the Steps (PDIRKAS) methods~\cite{houwen1992embedded,VanderHouwen1993,Sommeijer1993,   VanderHouwen1994,VanderveenEtAl1995}, providing two levels of parallelism in time.
Similar two-level parallelism in time has been achieved by a combination of PFASST~\cite{EmmettMinion2012} with parallel SDC~\cite{SchoebelEtAl2020}.

The key to fast convergence and thus good performance of either parallel SDC or iterated IRK is the choice of coefficients in the diagonal preconditioner~\cite[Sec.~3]{houwen1991iterated}.
The authors identify two possible optimization problems to compute good diagonal coefficients for either non-stiff or stiff problems.
Both seek to minimize the spectral radius of certain matrices but, since these are ill conditioned, optimization algorithms struggle as the number of parallel stages $s$ increases.
This was already acknowledged both by van der Houwen \& Sommeijer and Speck~\cite{houwen1991iterated,speck2018parallelizing}.
Van der Houwen \& Sommeijer proposed to use an objective function based on the stability function of the iterated IRK as a remedy.
However, this is only applicable to some types of IRK methods and worked only for stiff problems.

\subsection{Contributions}
We present a generic approach to compute optimized coefficients for diagonal preconditioners in SDC or iterated IRK.
It leads to three sets of coefficients that we call \MINSRNS, \MINSRS and \MINSRFLEX.
While \MINSRNS is suited for non-stiff problems, \MINSRS and \MINSRFLEX are
designed for stiff problems.
We provide analytical expressions for the coefficients of \MINSRNS and
\MINSRFLEX, and a generic approach to generate \MINSRS coefficients for
any type of collocation method.
We show that the resulting parallel SDC methods are remain accurate and stable compared to state-of-the art SDC preconditioners, in particular those by van der Houwen \& Sommeijer~\cite{houwen1991iterated} and Weiser~\cite{weiser2015faster}.
We demonstrate that parallel SDC methods can compete standard RKM from the literature  with respect to computational cost against, because they allow to exploit parallelism across the method without sacrificing speed of convergence.
All numerical experiments reported in this paper can be reproduced with the accompanying code~\cite{speck2024pySDC}.

\section{Optimal diagonally preconditioned Spectral Deferred Corrections}
\label{sec:theory}
We start by describing SDC in \S\ref{sec:SDC} and develop the new diagonal preconditioners in \S\ref{sec:optcoeff}.
For details on SDC see Dutt et al. or Huang et al.~\cite{dutt2000spectral,huang2006accelerating}.

\subsection{Spectral Deferred Corrections as a fixed point iteration}
\label{sec:SDC}
Consider the Picard formulation of the initial value problem~\eqref{eq:ODEsys}
on $[t_0, t_0+\dt]$
\begin{equation}\label{eq:picard}
    u(t) = u_0 + \int_{t_0}^{t}f(s, u(s))ds.
\end{equation}
for some time step size $\dt$.
By choosing $M\in\mathbb{N}$ collocation nodes
$0 \leq \node_1 < \dots < \node_M \leq 1$ and defining $t_m = t_0+\node_m\dt$
we can write \eqref{eq:picard} for each $t_m$ as
\begin{equation}\label{eq:picardNodes}
    u(t_m) =
    u_0 + \dt\int_0^{\nodem}f(t_0+s\dt, u(t_0+s\dt))ds,
    \quad m = 1, \dots, M.
\end{equation}
Let $\ell_i$ denote the $i^{\rm{th}}$ Lagrange polynomial associated with
the nodes $\node_1, \dots, \node_M$.
Using a polynomial approximation of the integrand $f$ turns~\eqref{eq:picardNodes} into
\begin{equation}
    \label{eq:collocation}
    u_m = u_0 +
        \dt\sum_{i=1}^M \left(\int_0^{\node_m}\ell_i(s)ds \right)
        f(t_i, u_i), \quad m = 1, \dots, M,
\end{equation}
where $u_m$ is a discrete approximation of $u(t_m)$.
We collect~\eqref{eq:collocation} for $m=1,\ldots,M$ in the compact matrix formulation
\begin{equation}\label{eq:linear_collocation_problem}
    \u - \dt\Qmat f(\u) = u_0 \vect{\one},
\end{equation}
with $\u=[u_1,\dots,u_M]^\top$,
$f(\u) = [f(t_1, u_1), \dots, f(t_M, u_M)]^\top$,
$\vect{\one} = [1,\dots,1]^\top $.
The collocation matrix $\Qmat \in \R^{M \times M}$ has entries
\begin{equation}\label{eq:definition_Q}
    [\Qmat]_{ij} = \int_0^{\node_i}\ell_j(s)ds \; =: q_{i,j}.
\end{equation}
We refer to~\eqref{eq:linear_collocation_problem} as the collocation problem.
This is equivalent to the first RKM step~\eqref{eq:RKM_Solve} since the collocation method is an IRK method with
$\matr{A}=\matr{Q}$,
$[\vect{b}]_i = \int_{0}^{1}\ell_i(s)ds$
and $[\vect{c}]_i = \node_i$~\cite[Th.~7.7]{hairer1993nonStiff}.
The second RKM step~\eqref{eq:RKM_Update} is equivalent to the update
\begin{equation}\label{eq:collUpdate}
    u(t_0 + \Delta t) \approx u_0 + \dt\vect{b}^\top f(\u).
\end{equation}
Note that if $\node_{M} = 1$ we can also set $u(t_0 + \Delta t) \approx u_M$ instead.

\begin{remark}\label{rem:collUpdate}
    Using the update~\eqref{eq:collUpdate} improves the order of the step solution but can also reduce numerical stability~\cite[Rem.~4]{ruprecht2016spectral}.
    This is confirmed by numerical experiments not presented here, as some preconditioners that appear to be $A$-stable loose this property
    when performing the collocation update.
    While this aspect would need further investigation, it motivates
    us to only use node distributions with $\node_{M}=1$ (\ie \RadauRight and \Lobatto) for better numerical stability.
\end{remark}

The distribution and type of quadrature nodes controls the order of the collocation method~\cite[Tab.~2.1]{houwen1991iterated}.
We use Legendre polynomials but other types can be used as well.
The interested reader can consult the book by Gautschi~\cite{gautschi2004orthogonal} for details and examples.
For example, \Lobatto-type nodes for a Legendre distribution produce a method of order $2M-2$.
Radau-II-type nodes for a Legendre distribution produce a method of order $2M-1$.
We refer to Radau-II quadrature as \RadauRight to make explicit that it includes the right boundary node $\node_M=1$.

SDC solves~\eqref{eq:linear_collocation_problem} with the preconditioned
fixed-point iteration
\begin{equation}
    \label{eq:sweep}
    \u^{k+1} = \u^{k} + \matr{P}^{-1}\left[
        u_0 \vect{\one} - \left(\u^{k} -\dt\Qmat f(\u^{k})\right) \right],
\end{equation}
where $\matr{P[\u]}:= \u - \dt\QDelta f(\u)$ and $\QDelta \in \R^{M \times M}$ is a matrix called the SDC preconditioner.
Because $\QDelta$ is typically chosen to be lower triangular, the inversion of $\matr{P}$ can be computed node by node by forward substitution.
Therefore, one SDC iteration~\eqref{eq:sweep} is often called a ``sweep''.
It is fully defined by the used preconditioner $\matr{P}$.
Note that the \emph{SDC preconditioner} $\QDelta$ is problem independent in contrast to $\matr{P}$.
The generic form of an SDC sweep is
\begin{equation}
    \label{eq:sweepGeneric}
    \u^{k+1} - \dt\QDelta f(\u^{k+1}) =
    u_0 \vect{\one} + \dt(\Qmat-\QDelta) f(\u^{k}).
\end{equation}
By setting $\QDelta=0$ we retrieve the classical Picard iteration (\PIC).
The original SDC method~\cite{dutt2000spectral} considers $\QDelta$ matrices based on explicit (\EE) and implicit Euler (\IE)
\begin{equation}\notag
    \Qd^{\EE} =
    \begin{bmatrix}
        0 & 0 & \dots & 0 & 0\\
        \Delta \node_2 & 0 & \dots & 0 & 0\\
        \vdots & \vdots & \ddots & \vdots & \vdots \\
        \Delta \node_2 & \Delta \node_3 & \dots & \Delta \node_{M} & 0
    \end{bmatrix}, \quad
    \Qd^{\IE} =
    \begin{bmatrix}
        \Delta \node_1 & 0 & \dots & 0 \\
        \Delta \node_1 & \Delta \node_2 & \dots & 0 \\
        \vdots & \vdots & \ddots & \vdots \\
        \Delta \node_1 & \Delta \node_2 & \dots & \Delta \node_{M}
    \end{bmatrix},
\end{equation}
where $\Delta \node_m = \nodem - \node_{m-1}$ for $1 \leq m \leq M$,
and $\node_0=0$.
The computational cost of solving~\eqref{eq:sweepGeneric} with forward substitutions is the same as that of a  DIRK method with $s=M$ stages.

\begin{remark}
    \label{rem:sdcRK}
    We can also define the SDC sweep based on the $\matr{A}$ and $\vect{c}$
    Butcher arrays of a DIRK method.
    Setting $\QDelta=\Qmat=\matr{A}$ retrieves~\eqref{eq:RKM_Solve} from~\eqref{eq:sweepGeneric}, independent of $\u^0$.
    The collocation update~\eqref{eq:collUpdate} is equivalent to the RKM update~\eqref{eq:RKM_Update}.
    Hence, any generic SDC implementation based on~\eqref{eq:sweepGeneric}
    can be used to run any type of DIRK or explicit RK method.
    Such an approach is implemented in \texttt{pySDC}~\cite{speck2024pySDC}.
\end{remark}

As first suggested by Speck~\cite{speck2018parallelizing}, it is also possible to use a diagonal $\QDelta$.
This allows to compute the sweep update for all nodes in parallel using $M$ threads or processes.
Note that if we consider any IRK method introduced by van der Houwen \& Sommeijer~\cite[Eq.~3.1a]{houwen1991iterated} with a dense Butcher matrix $\matr{A}$, the diagonal preconditioned iteration in~\eqref{eq:RKM_Solve} is equivalent to the generic SDC sweep~\eqref{eq:sweepGeneric} with a diagonal $\QDelta$.

Possible choices for the entries of $\QDelta$ that have been suggested are the diagonal elements of $\Qmat$, that is  $\QDelta=\diag(q_{11},\dots,q_{MM})$, or $\QDelta=\diag(\tau_1,\dots,\tau_M)$, corresponding to an implicit Euler step from $t_0$ to $t_m$ (\IEpar).
However, these preconditioners result in slow convergence of the SDC iteration~\eqref{eq:sweep}, making it inefficient~\cite{speck2018parallelizing,houwen1991iterated}.
Hence, we focus on finding better coefficients for a diagonal $\QDelta$ for which the resulting SDC iteration converges rapidly.

\subsection{Optimal coefficients for diagonal preconditioning}
\label{sec:optcoeff}

Like van der Hou\-wen \& Sommeijer and Speck~\cite{houwen1991iterated,speck2018parallelizing} we start with Dahlquist's
test equation
\begin{equation}
    \label{eq:dahlquist}
    \frac{du}{dt} = \lambda u, \quad \lambda \in \mathbb{C}, \; t \in [0, T], \; u(0)=1.
\end{equation}
Applying~\eqref{eq:sweepGeneric} to~\eqref{eq:dahlquist} results in the sweep
\begin{equation}\label{eq:SDC_test_eq}
    (\I - \dl \Qd) \ukk = \dl (\Q - \Qd)\uk + \u_0.
\end{equation}
Let $\ek := \uk - \u$ be the error to the exact solution of the collocation problem \eqref{eq:linear_collocation_problem} and $z:=\dl$.
The iteration matrix $\K(z)$ governing the error is
\begin{equation}
    \label{eq:error_iterations}
    \ekk = \K(z) \ek, \quad \K(z) = z \left(\I - z \Qd\right)^{-1}(\Q - \Qd).
\end{equation}
where (NS) denotes the \emph{non-stiff} and (S) the \emph{stiff} limit of the SDC iteration matrix.

To find optimal diagonal coefficients for $\QDelta$, we use the spectral radius of the iteration matrix $\rho(\K(z))$ as indicator for convergence speed.
However, the dependency on $z=\dl$ would make the resulting optimization problem specific.
Therefore, we consider the spectral radii of the matrices
\begin{equation}\notag
	\KNS = \lim\limits_{|z| \rightarrow 0} \frac{\K(z)}{z}, \quad
    \KS = \lim\limits_{|z| \rightarrow \infty} \K(z).
\end{equation}
In particular, we aim to find diagonal coefficients such that they become
nilpotent to ensure fast asymptotic convergence~\cite{weiser2015faster}.
Short algebraic calculations yield
\begin{align}
    \KNS &= \Qmat-\QDelta,\\
    \KS &= \I-\QDelta^{-1}\Qmat.
\end{align}
\begin{table}\label{tab:coeffs}
    \centering
    \begin{tabular}{c|c|c}
        coefficients & reference & spectral radius $\rho(\KS)$ \\
        \hline
        \HOUWENSOMMEIJER & van der Houwen \& Sommeijer 1991~\cite{houwen1991iterated} & 0.025 \\
        \texttt{MIN} & Speck 2018~\cite{speck2018parallelizing} & 0.42 \\
        \texttt{MIN3} & Speck at al.~2024~\cite{speck2024pySDC} & 0.0081 \\ \hline
    \end{tabular}
    \caption{Spectral radius $\rho(\KNS)$ of the non-stiff iteration matrix for optimal diagonal coefficients found in the literature using $M=4$ \RadauRight nodes.}
\end{table}

The $\KS$ matrix was considered both by van der Houwen \& Sommeijer~\cite{houwen1991iterated} and Speck~\cite{speck2018parallelizing}.
Both noticed the difficulty of finding optimal coefficients when minimizing
the spectral radius of $\KS$.
Several diagonal coefficients are available in the literature and summarized in Appendix~\ref{ap:coeffs}.
Table~\ref{tab:coeffs} shows the spectral radius of the resulting $\KS$.
Values for $\rho(\KS)$ depend on the used optimization approach.
The \texttt{MIN3} coefficients proposed by Speck are similar to \HOUWENSOMMEIJER, but were obtained differently by using an online black box optimization software that unfortunately is not available anymore.

The matrix in the non-stiff limit $\KNS$ was only considered by van der Houwen \& Sommeijer~\cite{houwen1991iterated} but discarded because of the difficulty to numerically optimize the spectral radius of $\KNS$ and the poor performance of the obtained diagonal coefficients.
Both $\KS$ and $\KNS$ become very poorly conditioned as $M$ increases, which makes computing the optimal coefficients numerically very difficult.
By contrast, we propose an analytical approach to find diagonal coefficients by focusing on nilpotency of $\KNS$ and $\KS$ instead.

\subsubsection{Preliminaries}
Given a set of distinct nodes $0 \leq \node_1 < \dots < \node_M \leq 1$, we can associate any vector $\x \in \R^M$ uniquely with a polynomial $x \in P_{M-1}$ with real coefficients via the mapping
\begin{equation}\label{eq:Phi}
    \Phi: \x \mapsto x(t) = \sum_{j = 1}^M \x_j \ell_j(t),
\end{equation}
where $\ell_i \in P_{M-1}$ are the Lagrange polynomials for the nodes $\node_1, \dots, \node_M$.
Inversely, a polynomial $x \in P_{M-1}$ can be mapped to a vector $\x \in \R^M$ by evaluating it at the nodes and setting $\x_j = x(\node_j)$.
The bijective mapping $\Phi$ defines the isomorphism $\R^M \cong P_{M-1}$.
Using the definition of the collocation matrix $\Q$ from \eqref{eq:definition_Q}, we obtain
\begin{equation}\label{eq:Qx}
    \sum_{j = 1}^M q_{m,j} \x_j =
    \sum_{j=1}^M \x_j \left( \int_0^{\node_m} \ell_j(s)~ds \right) =
    \int_0^{\node_m} \sum_{j=1}^M \x_j \ell_j(s)~ds =
    \int_0^{\node_m} x(s)~ds
\end{equation}
for $m=1, \ldots, M$ so that
\begin{equation}
    \Q \x = \begin{bmatrix} \int_0^{\node_1} x(s)~ds \\ \vdots \\ \int_0^{\node_M} x(s)~ds \end{bmatrix}.
\end{equation}
Hence, multiplying a vector $\x \in \R^M$ by $\Q$ generates a vector that has the associated polynomial integrated from zero to the quadrature nodes $\node_m$ as components.
Applying $\Phi$ to $\Q \x$ fits a polynomial through the points $(\node_m, \int_0^{\tau_m} x(t)~dt)$ so that\footnote{Strictly speaking we should write $(Qx)(t)$ since $Qx \in P^{M-1}$ but we omit the argument $t$ for less cluttered notation.}
\begin{equation}
    \label{eq:Qnonbold}
    Q x := \sum_{j=1}^M \left( \int_0^{\node_j} x(s)~ds \right) l_j(t).
\end{equation}
The mappings $\Q$, $Q$ and $\Phi$ commute, see Figure~\ref{fig:mapping}.
\begin{figure}[t]
    \centering\label{fig:mapping}
    \includegraphics[width=0.28\linewidth]{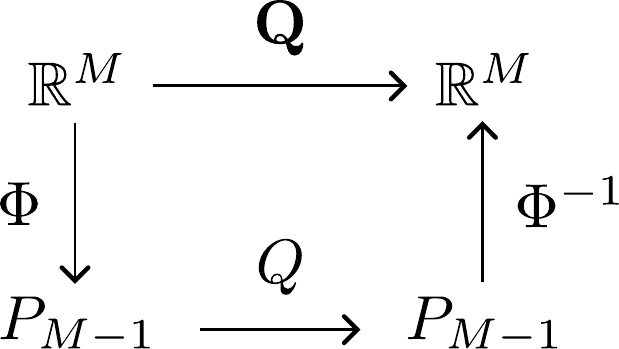}
    \caption{Bijective mapping between $\R^M$ and $P_{M-1}$.}
\end{figure}
\begin{prop}
    \label{prop:monomials}
For $1 \leq n \leq M-1$, let $\vect{\tau}^{n} \in \R^M$ be the vector $\left( \node_1^n, \ldots, \node_M^n\right)$  and $\tau^n \in P_{M-1}$ the monomial $t \mapsto t^n$.
Then
\begin{equation}
    \vect{\tau}^n \cong \tau^n.
\end{equation}
\end{prop}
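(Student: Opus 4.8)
The plan is to reduce the claim to the uniqueness of polynomial interpolation at the $M$ distinct nodes $\node_1 < \dots < \node_M$. By the definition of $\Phi$ in \eqref{eq:Phi}, the polynomial associated with the vector $\vect{\tau}^{n}$ is $\sum_{j=1}^M \node_j^n \ell_j(t)$, so the assertion $\vect{\tau}^{n} \cong \tau^n$ is equivalent to the identity
\[
    \sum_{j=1}^M \node_j^n\, \ell_j(t) = t^n .
\]
First I would observe that both sides are polynomials of degree at most $M-1$: the left-hand side because it is a linear combination of the Lagrange polynomials $\ell_j \in P_{M-1}$, and the right-hand side precisely because the hypothesis $1 \le n \le M-1$ ensures $\tau^n \in P_{M-1}$. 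This degree bound is the only place the restriction on $n$ is used, and it is what makes the statement true.

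Next I would evaluate both sides at an arbitrary node $\node_i$. Using the interpolation property $\ell_j(\node_i) = \delta_{ij}$ of the Lagrange basis, the left-hand side collapses to $\node_i^n$, which is exactly the value of the monomial $t \mapsto t^n$ at $t = \node_i$. Hence the two polynomials --- both of degree at most $M-1$ --- agree at the $M$ distinct points $\node_1, \dots, \node_M$, and by uniqueness of the interpolating polynomial in $P_{M-1}$ they coincide. Equivalently, one may argue directly through the inverse of $\Phi$: since $\tau^n \in P_{M-1}$, it is sent by $\Phi^{-1}$ to the vector of its nodal values $\big(\node_1^n, \dots, \node_M^n\big) = \vect{\tau}^{n}$, and bijectivity of $\Phi$ then gives $\vect{\tau}^{n} \cong \tau^n$.

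I do not expect a genuine obstacle here: the result is an immediate consequence of $\Phi$ restricting to a bijection between $\R^M$ and $P_{M-1}$ together with the exactness of Lagrange interpolation on that space. The only point requiring a little care is to invoke the bound $n \le M-1$, without which $\tau^n$ would not lie in the domain on which $\Phi^{-1}$ reproduces it --- although $\vect{\tau}^{n}$ would of course still be a well-defined vector mapping to some polynomial in $P_{M-1}$.
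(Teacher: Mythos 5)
Your argument is correct and matches the paper's proof: both show that $\Phi\vect{\tau}^n$ and the monomial $\tau^n$ lie in $P_{M-1}$ and interpolate the same $M$ points $(\node_j,\node_j^n)$, then invoke uniqueness of the interpolating polynomial. The extra remark about where the bound $n\le M-1$ enters is a fine clarification but does not change the route.
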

\begin{proof}
By definition of the mapping, $\Phi \vect{\tau}^n$ is the polynomial of degree $M-1$ interpolating the points $(\node_j, \node_j^n)$ for $j=1, \ldots, M$.
Since the monomial $\tau^n \in P_{M-1}$ interpolates these points and the interpolating polynomial is unique, we must have $\Phi \vect{\tau}^n = \tau^n$.
\end{proof}
\begin{remark}
Note that we can still apply $\Phi$ to vectors $\vect{\tau}^n \in \R^M$ for $n > M-1$, but we will no longer obtain the monomial $\tau^n$. Instead, we obtain the polynomial
\begin{equation}
    p(t) = \sum_{j=1}^M \node_j^n l_j(t)
\end{equation}
of degree $M-1$ that interpolates the points $(\node_j, \node_j^n)$.
\end{remark}
\begin{prop}
    \label{prop:Q}
With the definitions from Proposition~\ref{prop:monomials} we have
\begin{equation}
    \Q \vect{\tau}^n \cong \frac{\node^{n+1}}{n+1} %= \frac{1}{n+1} \vect{\tau}^{n+1} \cong \frac{1}{n+1}
\end{equation}
for $1 \leq n \leq M-2$.
\end{prop}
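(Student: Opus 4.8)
The plan is to transport $\vect{\tau}^n$ through the commuting diagram of Figure~\ref{fig:mapping}: identify $\vect{\tau}^n$ with the monomial $\tau^n$, apply the polynomial-level operator $Q$ of \eqref{eq:Qnonbold} (equivalently, multiply by $\Q$ on the vector side), and then recognise the outcome as a scalar multiple of another monomial vector. Keeping track of polynomial degrees is exactly what forces the restriction $1 \le n \le M-2$.

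First I would use Proposition~\ref{prop:monomials}, which applies since $n \le M-1$, to write $\Phi\vect{\tau}^n = \tau^n$; equivalently $\sum_{j=1}^M \node_j^n \ell_j(s) = s^n$, because $s^n$ lies in $P_{M-1}$ and therefore coincides with its own interpolant at the $M$ distinct nodes. Then I would compute the entries of $\Q\vect{\tau}^n$ directly from \eqref{eq:definition_Q} and \eqref{eq:Qx}: for $m = 1, \ldots, M$,
\[
  [\Q\vect{\tau}^n]_m = \sum_{j=1}^M q_{m,j}\,\node_j^n = \int_0^{\node_m} \sum_{j=1}^M \node_j^n \ell_j(s)\,ds = \int_0^{\node_m} s^n\,ds = \frac{\node_m^{\,n+1}}{n+1},
\]
so that, as an element of $\R^M$, $\Q\vect{\tau}^n = \frac{1}{n+1}\vect{\tau}^{n+1}$.

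The final step is to apply $\Phi$ and invoke Proposition~\ref{prop:monomials} a second time — and this is the only place where the hypothesis is genuinely used. Since $n \le M-2$ we have $n+1 \le M-1$, so Proposition~\ref{prop:monomials} applies to $\vect{\tau}^{n+1}$ and gives $\Phi\vect{\tau}^{n+1} = \tau^{n+1}$, the monomial $t\mapsto t^{n+1}$. By linearity of $\Phi$ this yields $\Q\vect{\tau}^n \cong \frac{1}{n+1}\tau^{n+1} = \frac{\node^{n+1}}{n+1}$, as claimed.

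I do not anticipate any real obstacle; the one thing worth checking is that the argument does break at $n = M-1$, which confirms the range is sharp: there $\Q\vect{\tau}^{M-1} = \frac{1}{M}\vect{\tau}^{M}$ as a vector, but $\Phi\vect{\tau}^{M}$ is the degree-$(M-1)$ interpolant through the points $(\node_j, \node_j^{M})$ rather than the monomial $t^M$, in agreement with the Remark following Proposition~\ref{prop:monomials}. Hence this line of reasoning gives nothing beyond $1 \le n \le M-2$.
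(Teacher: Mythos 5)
Your proposal is correct and follows essentially the same route as the paper's proof: integrate the monomial identified with $\vect{\tau}^n$ via Proposition~\ref{prop:monomials}, obtain $\Q\vect{\tau}^n = \tfrac{1}{n+1}\vect{\tau}^{n+1}$, and apply Proposition~\ref{prop:monomials} once more, which is where $n+1\le M-1$ is needed. The only cosmetic difference is that you work directly with the vector entries of $\Q\vect{\tau}^n$ while the paper phrases the computation on the polynomial side via $Q\Phi\vect{\tau}^n$ and $\Phi^{-1}$; your closing observation about the failure at $n=M-1$ matches the paper's remark after Proposition~\ref{prop:monomials}.
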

\begin{proof}
Using~\eqref{eq:Qnonbold} we have
\begin{equation}\label{eq:q_on_mon}
     Q \Phi \vect{\tau}^n = \sum_{j=1}^{M} \left( \int_0^{\node_j} s^n~ds \right) \ell_j(t) = \frac{1}{n+1} \sum_{j=1}^M \tau_j^{n+1} \ell_j(t).
\end{equation}
Because $l_j(\node_m) = 1$ if $j=m$ and zero otherwise, evaluating this polynomial at the nodes $\node_j$ when applying $\Phi^{-1}$ recovers the values $\tau_j^{n+1}/(n+1)$ so that
\begin{equation}
    \Phi^{-1} Q \Phi \vect{\tau}^n = \frac{1}{n+1} \vect{\tau}^{n+1}.
\end{equation}
Because $\Phi$ is an isomorphism, we can apply it to both sides of the equation and, since $n+1 \leq M-1$, use Proposition~\ref{prop:monomials} to get
\begin{equation}
    \label{eq:Qphitau_n}
    Q \Phi \vect{\tau}^n = \frac{1}{n+1} \Phi(\vect{\tau}^{n+1}) = \frac{1}{n+1} \tau^{n+1}.
\end{equation}
Noting that $\Q \vect{\tau}^n \cong Q \Phi \vect{\tau}^n$ completes the proof.
\end{proof}
\begin{prop}
    \label{prop:Qdm}
Consider a set of nodes $0 \leq \node_1 < \dots < \node_M \leq 1$ with $\node_1 > 0$ and $m \in \mathbb{N}$.
Let
\begin{equation}\label{eq:Qdm}
    \Qdm{m} := \operatorname{diag}
        \left(\frac{\node_1}{m}, \dots, \frac{\node_M}{m}\right)
\end{equation}
be a diagonal matrix with entries $\node_j/m$ and $\qdm{m} := \Phi \Qdm{m} \Phi^{-1}$.
For $1\leq n \leq M-2$, it holds that
\begin{equation} \label{eq:qdm_on_mon}
    \Qdm{m} \vect{\tau}^n \cong \frac{\node^{n+1}}{m}
\end{equation}
where $\tau^{n+1}$ is again the monomial $t \mapsto t^{n+1}$.
\end{prop}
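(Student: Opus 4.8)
The plan is to reduce the claim to Proposition~\ref{prop:monomials} using only the fact that $\Phi$ is a linear isomorphism, so that the identification $\cong$ respects scalar multiplication. First I would note that $\Qdm{m}$ is diagonal with $j$-th entry $\node_j/m$, so its action on $\vect{\tau}^n = (\node_1^n, \ldots, \node_M^n)$ is componentwise: $[\Qdm{m}\vect{\tau}^n]_j = (\node_j/m)\,\node_j^n = \node_j^{n+1}/m$. In other words $\Qdm{m}\vect{\tau}^n = \frac{1}{m}\vect{\tau}^{n+1}$, as an exact identity of vectors in $\R^M$ with no approximation involved.

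Next I would invoke Proposition~\ref{prop:monomials} for the exponent $n+1$. The hypothesis $1 \leq n \leq M-2$ gives $2 \leq n+1 \leq M-1$, which is precisely the range in which Proposition~\ref{prop:monomials} guarantees $\vect{\tau}^{n+1} \cong \tau^{n+1}$, the genuine monomial $t \mapsto t^{n+1}$ (and not merely its degree-$(M-1)$ interpolant, cf.\ the remark following that proposition). Since the identification $\cong$ is realized by the linear map $\Phi$, multiplication by the scalar $1/m$ commutes with it, so $\frac{1}{m}\vect{\tau}^{n+1} \cong \frac{1}{m}\tau^{n+1} = \node^{n+1}/m$. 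Chaining the two statements yields $\Qdm{m}\vect{\tau}^n \cong \node^{n+1}/m$, which is the assertion; equivalently, in polynomial form, $\qdm{m}\tau^n = \node^{n+1}/m$.

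I do not expect a genuine obstacle in this proof — it is essentially a one-line componentwise computation followed by an application of the previous proposition. The only point needing a little care is the index bookkeeping: one must check that $n+1$ still lies in $\{1,\ldots,M-1\}$ so that Proposition~\ref{prop:monomials} produces the true monomial rather than its interpolant. I would also remark that the hypothesis $\node_1 > 0$ is not used here; it is presumably carried along because it guarantees that $\Qdm{m}$ is invertible, which will matter for later results involving $\KS = \I - \QDelta^{-1}\Qmat$.
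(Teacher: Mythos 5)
Your proof is correct and follows essentially the same route as the paper's: a componentwise computation showing $\Qdm{m}\vect{\tau}^n = \frac{1}{m}\vect{\tau}^{n+1}$, followed by applying $\Phi$ and Proposition~\ref{prop:monomials} (valid since $n+1 \leq M-1$) to identify the result with $\tau^{n+1}/m$. Your side remark that $\node_1 > 0$ is not actually needed here is also accurate.
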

\begin{proof}
We have
\begin{equation}
    \Qdm{m} \vect{\tau}^{n} = \begin{bmatrix} \frac{\node_1}{m} \node_1^n \\ \vdots \\ \frac{\node_M}{m} \node_m^n \end{bmatrix} = \frac{1}{m} \begin{bmatrix} \node_1^{n+1} \\ \vdots \\ \node_M^{n+1} \end{bmatrix} = \frac{1}{m} \vect{\tau}^{n+1}.
\end{equation}
Applying $\Phi$ and using Proposition~\ref{prop:monomials} yields
\begin{equation}
    \Qdm{m} \vect{\tau}^n \cong \Phi \Qdm{m} \vect{\tau}^n = \Phi \frac{\vect{\tau}^{n+1}}{m} = \frac{\tau^{n+1}}{m}.
\end{equation}
\end{proof}
\begin{prop}
    \label{prop:Qdminv}
For nodes $0 < \node_1 < \dots < \node_M \leq 1$, we have
\begin{equation}
    \label{eq:qdmInv_on_mon}
    \Qdm{m}^{-1}\vect{\tau}^{n+1} = m \vect{\tau}^n \cong m \tau^n = \qdm{m}^{-1} \node^{n+1}
\end{equation}
for $1 \leq n \leq M-1$.
\end{prop}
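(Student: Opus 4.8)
The plan is to mirror the argument for Proposition~\ref{prop:Qdm}, running it ``in reverse'': first a purely componentwise computation with the diagonal matrix, and then transporting the resulting identity to $P_{M-1}$ via the isomorphism $\Phi$ together with Proposition~\ref{prop:monomials}.

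First I would observe that, since $0 < \node_1 < \dots < \node_M$, the diagonal matrix $\Qdm{m}$ is invertible with $\Qdm{m}^{-1} = \operatorname{diag}(m/\node_1, \dots, m/\node_M)$. Acting entrywise on $\vect{\tau}^{n+1}$ gives $[\Qdm{m}^{-1}\vect{\tau}^{n+1}]_j = (m/\node_j)\,\node_j^{n+1} = m\,\node_j^{n}$, hence $\Qdm{m}^{-1}\vect{\tau}^{n+1} = m\vect{\tau}^{n}$; this step imposes no restriction on $n$. Next, because $1 \leq n \leq M-1$, Proposition~\ref{prop:monomials} gives $\vect{\tau}^{n} \cong \tau^{n}$, and linearity of $\Phi$ upgrades this to $m\vect{\tau}^{n} \cong m\tau^{n}$, which is the middle correspondence in the claim.

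For the final equality I would use that $\qdm{m} = \Phi\Qdm{m}\Phi^{-1}$ is invertible with $\qdm{m}^{-1} = \Phi\Qdm{m}^{-1}\Phi^{-1}$, and read $\node^{n+1}$ as the polynomial $\Phi\vect{\tau}^{n+1} \in P_{M-1}$. Then $\qdm{m}^{-1}\node^{n+1} = \Phi\Qdm{m}^{-1}\Phi^{-1}\Phi\vect{\tau}^{n+1} = \Phi(\Qdm{m}^{-1}\vect{\tau}^{n+1}) = \Phi(m\vect{\tau}^{n}) = m\tau^{n}$, by the two previous steps.

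I do not expect any serious obstacle, since the statement is essentially the inverse of Proposition~\ref{prop:Qdm}. The one point deserving care in the write-up is the meaning of $\node^{n+1}$ when $n = M-1$: there $\vect{\tau}^{n+1} = \vect{\tau}^{M}$, and $\Phi\vect{\tau}^{M}$ is the degree-$(M-1)$ interpolant of the points $(\node_j,\node_j^{M})$ rather than the monomial $t^{M}$ (\cf the remark following Proposition~\ref{prop:monomials}), so $\node^{n+1}$ in the last term must be understood as this image under $\Phi$, not as a literal monomial of degree $M$.
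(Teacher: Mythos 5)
Your proposal is correct and follows essentially the same route as the paper's proof: the entrywise computation $\Qdm{m}^{-1}\vect{\tau}^{n+1}=m\vect{\tau}^{n}$, Proposition~\ref{prop:monomials} to upgrade this to $m\vect{\tau}^{n}\cong m\tau^{n}$, and the conjugation $\qdm{m}^{-1}=\Phi\,\Qdm{m}^{-1}\Phi^{-1}$ for the final equality. Your closing remark about reading $\node^{n+1}$ as $\Phi\vect{\tau}^{n+1}$ when $n=M-1$ is a careful touch; the paper handles the same edge case implicitly by treating $\Phi^{-1}$ as evaluation at the nodes, which yields the identical computation.
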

\begin{proof}
Because of $\node_1 > 0$, the matrix $\Qdm{m}$ is invertible with inverse
\begin{equation}
    \Qdm{m}^{-1} = \text{diag}\left( \frac{m}{\node_1}, \ldots, \frac{m}{\node_M} \right)
\end{equation}
and thus
\begin{equation}
    \Qdm{m}^{-1} \vect{\tau}^{n+1} = \text{diag}\left( \frac{m}{\node_1} \node_1^{n+1}, \ldots, \frac{m}{\node_M} \node_M^{n+1} \right) = m \vect{\tau}^n.
\end{equation}
For $1 \leq n \leq M-1$ we have $\vect{\tau}^n \cong \tau^n$ by Proposition~\ref{prop:monomials}.
Finally,
\begin{equation}
    \qdm{m}^{-1} \tau^{n+1} = \Phi \Qdm{m}^{-1} \Phi^{-1} \tau^{n+1} = \Phi \Qdm{m}^{-1} \begin{bmatrix} \node_1^{n+1} \\ \vdots \\ \node_m^{n+1} \end{bmatrix} = m \Phi \begin{bmatrix} \node_1^n \\ \vdots \\ \node_M^n \end{bmatrix} = m\tau^n
\end{equation}
using Proposition~\ref{prop:monomials}.
\end{proof}

These results will be used in the proofs in the next sections.
\subsubsection{\MINSRNS preconditioning}
\label{sec:nonstiff}
Here we introduce the \MINSRNS SDC preconditioner $\QDelta=\Qdm{M}$ with constant coefficients that is suited for non-stiff problems.

\begin{theorem}\label{th:Q-Qd}
    For any set of collocation nodes $0 < \node_1 < \dots < \node_M \leq 1$, the matrix $\Q - \Qdm{M}$ is nilpotent with index $M$. For the \MINSRNS preconditioner, setting $\QDelta=\Qdm{M}$, it holds that $\rho(\KNS)=0$.
\end{theorem}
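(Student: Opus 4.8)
The plan is to exhibit a basis of $\R^M$ in which $\Q - \Qdm{M}$ becomes a strictly subdiagonal shift, so that its nilpotency and nilpotency index can simply be read off. The natural candidate, suggested by the Preliminaries, is the family of ``monomial vectors'' $\vect{\tau}^0 = \one,\, \vect{\tau}^1,\, \dots,\, \vect{\tau}^{M-1}$: by Proposition~\ref{prop:monomials} these correspond under the isomorphism $\Phi$ of~\eqref{eq:Phi} to the monomials $1, \tau, \dots, \tau^{M-1}$, which form a basis of $P_{M-1}$, hence $\{\vect{\tau}^0, \dots, \vect{\tau}^{M-1}\}$ is a basis of $\R^M$. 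Since nilpotency and the nilpotency index (the least $k$ with $(\Q-\Qdm{M})^k = 0$) are invariant under change of basis, it suffices to describe how $\Q - \Qdm{M}$ acts on these vectors.

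I would compute the action directly and component-wise. For every $0 \le n \le M-1$ and every node index $m$, $(\Q\vect{\tau}^n)_m = \int_0^{\node_m} s^n\,ds = \node_m^{n+1}/(n+1)$, so $\Q\vect{\tau}^n = \vect{\tau}^{n+1}/(n+1)$, while $(\Qdm{M}\vect{\tau}^n)_m = (\node_m/M)\,\node_m^n = \node_m^{n+1}/M$, so $\Qdm{M}\vect{\tau}^n = \vect{\tau}^{n+1}/M$ (a direct check that also covers the endpoints $n=0$ and $n=M-1$, which lie outside the range of Propositions~\ref{prop:Q} and~\ref{prop:Qdm}). Subtracting gives
\begin{equation}\notag
    (\Q - \Qdm{M})\vect{\tau}^n = c_n\,\vect{\tau}^{n+1}, \qquad c_n := \frac{1}{n+1} - \frac{1}{M}.
\end{equation}
For $0 \le n \le M-2$ we have $c_n > 0$ and $\vect{\tau}^{n+1}$ is again one of the basis vectors; for $n = M-1$ we have $c_{M-1} = 0$, hence $(\Q - \Qdm{M})\vect{\tau}^{M-1} = \vect{0}$. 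Thus, in the ordered basis $(\vect{\tau}^0, \dots, \vect{\tau}^{M-1})$, the matrix of $\matr{N} := \Q - \Qdm{M}$ carries $c_0, \dots, c_{M-2}$ on its first subdiagonal and zeros elsewhere.

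Nilpotency of index exactly $M$ then follows by iteration: $\matr{N}^k\vect{\tau}^n = c_n c_{n+1}\cdots c_{n+k-1}\,\vect{\tau}^{n+k}$ as long as $n+k \le M-1$, and $\matr{N}^k\vect{\tau}^n = \vect{0}$ once $n+k \ge M$. Hence $\matr{N}^M$ annihilates every basis vector, so $\matr{N}^M = 0$, whereas $\matr{N}^{M-1}\vect{\tau}^0 = \left(\prod_{i=0}^{M-2} c_i\right)\vect{\tau}^{M-1} \neq \vect{0}$ since every $c_i$ is strictly positive; so $\matr{N}^{M-1}\neq 0$ and the index is $M$. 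Finally, for the \MINSRNS preconditioner $\QDelta = \Qdm{M}$, so $\KNS = \Q - \QDelta = \matr{N}$ is nilpotent, all its eigenvalues are zero, and therefore $\rho(\KNS) = 0$. The only point needing care is the bookkeeping at the ends of the index range — the cases $n = 0$ and $n = M-1$ are not covered by Propositions~\ref{prop:Q} and~\ref{prop:Qdm} and must be handled by the component-wise computation above — together with the observation that $c_n \neq 0$ exactly for $n \le M-2$, which is what pins the index at $M$ rather than something smaller; beyond this I anticipate no real obstacle.
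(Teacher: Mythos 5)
Your proof is correct and takes essentially the same route as the paper: both hinge on the single computation $(\Q-\Qdm{M})\vect{\tau}^{\,n} = \left(\frac{1}{n+1}-\frac{1}{M}\right)\vect{\tau}^{\,n+1}$ with the coefficient vanishing at $n=M-1$, the paper carrying this out in $P_{M-1}$ via the isomorphism $\Phi$ and an induction on the lowest surviving degree, while you phrase it as a weighted subdiagonal shift in the monomial-vector basis of $\R^M$. A small bonus of your version is that it explicitly pins the nilpotency index at exactly $M$ (since $c_0\cdots c_{M-2}\neq 0$) and handles the endpoint cases $n=0$ and $n=M-1$ by direct component-wise evaluation, points the paper's proof leaves implicit.
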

\begin{proof}
Let $\vect{\tau}^{M-1} \in \R^M$.
Then it holds that
\begin{equation}
    \Q \vect{\tau}^{M-1} = \begin{bmatrix} \int_0^{\node_1} t^{M-1}~ds \\ \vdots \\ \int_0^{\node_M} t^{M-1}~ds\end{bmatrix} = \frac{1}{M} \vect{\tau}^M
\end{equation}
and
\begin{equation}
    \Qdm{M} \vect{\tau}^{M-1} = \frac{1}{M} \vect{\tau}^{M}
\end{equation}
so that $\vect{\tau}^{M-1} \in \ker(\Q - \Qdm{M})$.
By Proposition~\ref{prop:monomials} and because $\Phi$ is an isomorphism, $\tau^{M-1} = \Phi \vect{\tau}^{M-1}$ is in the kernel of $Q - \qdm{m}$.
For $1 \leq n \leq M-2$ the Propositions~\ref{prop:Q} and~\ref{prop:Qdm} yield
\begin{equation}\label{eq:tM-1}
    (\Q-\Qdm{M})\vect{\node}^n
    \cong
    \left(\frac{1}{n+1}-\frac{1}{M}\right) \node^{n+1}
\end{equation}
Now consider any polynomial $p \in P_{M-1}$ with $p(t) = \sum_{j=1}^M p_j t^j$.
Then,
\begin{equation}
    (Q-\qdm{M}) p =  \sum_{j=1}^{M-1} p_j \left( \frac{1}{j+1} - \frac{1}{M} \right) t^{j+1} = \sum_{j=2}^M p_{j-1} \left( \frac{1}{j} - \frac{1}{M} \right) t^j
\end{equation}
so that $(Q - \qdm{M})p \in \left( P_{M-1} \setminus P_0 \right) \cup \{0\}$.
By induction, we get
\begin{equation}
    (Q-\qdm{M})^k p
    \in \left( P_{M-1} \setminus P_{k-1} \right) \cup \{0\}.
\end{equation}
Applying $(Q-\qdm{M})$ $M$ times yields
\begin{equation}\notag
    (Q-\qdm{M})^M p \in \left( P_{M-1} \setminus P_{M-1} \right) \cup \{0\} = \{0\}.
\end{equation}
As $p\in P_{M-1}$ was arbitrary, we have $(Q-\qdm{M})^M = 0$.
Because $\Phi$ is an isomorphism we find that $(\Q - \Qdm{M})^M = 0$ and therefore $\rho(\KNS)=0$.
\end{proof}

\begin{remark}\label{rem:errorTerms}
    In the proof of Theorem~\ref{th:Q-Qd}, one can interpret
    $p(\tau)$ as a polynomial representation of the
    collocation error, since the iteration matrix is approximated
    by $\KNS$.
    Hence, each SDC iteration using $\Qdm{M}$ preconditioning
    improves the solution quality by
    removing the lowest order term in the error,
    up to the point where there is no term left.
\end{remark}

Note that the \MINSRNS coefficients are different from the ones derived by van der Houwen and Sommeijer~\cite[Sec.~3.3.1]{houwen1991iterated}.
They suggest to use a diagonal matrix that satisfies
\begin{equation}\notag
    \QDelta^{-1}\vect{\tau} = \Q^{-1}\vect{\tau},
\end{equation}
which leads to $\QDelta=\Qdm{1}$ and corresponds to using an Implicit
Euler step between $t_0$ and the nodes time $t_m$ (\IEpar).
The reason is that they aimed to improve convergence of the non-stiff components of the solution for large time-steps, which is different from minimizing the spectral radius of $\KNS$.
%Numerical experiments show that the \MINSRNS preconditioner from Theorem~\ref{th:Q-Qd} that analytically minimizes $\rho(\KNS)$ does yield better numerical results than the one proposed in \cite{houwen1991iterated}.

\subsubsection{\MINSRS preconditioning}
\label{sec:det}
Here we introduce a SDC preconditioner with constant coefficients that is suited
for stiff problems.

\begin{definition}\label{def:MIN-SR-S}
    Consider a set of collocation nodes $0 < \node_1 < \dots < \node_M \leq 1$.
    We call a diagonal matrix $\QDelta$ with increasing diagonal entries that minimizes
    \begin{equation}\label{eq:MIN-SR-S}
          \left|\det\left[(1-t)\Imat + t\QDelta^{-1}\Qmat\right] - 1\right|,
          \quad \forall t \in \{\node_1,\dots,\node_{M}\}
    \end{equation}
    a \MINSRS preconditioner for SDC.
    Such a preconditioner finds a local minimum for $\rho(\KS)$.
\end{definition}

As mentioned above, using the spectral radius as objective function makes the optimization problem very challenging to solve numerically.
Instead, we search for diagonal coefficients such that $\KS$ is nilpotent.
While we have no guarantee that such coefficients exist for every $M$, it is known that for $M=2$ there are two possible minimizers but only one where the coefficients are ordered~\cite{houwen1991iterated}.
If such coefficients exist for any $M$, the following holds
\begin{equation}\label{eq:detNilpotent}
    \forall t \in \R,\quad \det\left[\Imat + t(\QDelta^{-1}\Qmat - \Imat)\right] - 1 = 0.
\end{equation}
Since $\det\left[\Imat + t(\QDelta^{-1}\Qmat - \Imat)\right] - 1$ is a polynomial in $t$ of degree $M$, we only need to check~\eqref{eq:detNilpotent} for $M+1$ points.
Because the equation is trivially satisfied for $t=0$, checking it for nodes $0 < \node_1 < \dots < \node_{M}$ is sufficient to show~\eqref{eq:MIN-SR-S}.

Note that currently there is no theory showing whether~\eqref{eq:MIN-SR-S} has one, several or no solution.
We use \texttt{MINPACK}'s \texttt{hybrd} algorithm implemented in \texttt{scipy} to find diagonal coefficients that minimize locally the spectral radius of $\KS$. 
For $M=4$ for example, the \MINSRS coefficients for \RadauRight nodes shown in Appendix~\ref{ap:coeffs} give $\rho(\KS)=0.00024$.
However, this approach does not ensure that the diagonal coefficients are increasingly ordered.
Because order coefficients led to better stability in our numerical experiments, see the discussion in §\ref{sec:numStability}, we used a particular choice of starting value for the minimization .
Because the \MINSRNS coefficients and the increasingly ordered coefficients minimizing the stiff spectral radius $\rho(\KS)$ are similar, we used \MINSRNS as starting values for the optimization finding the \MINSRS coefficients.
This yielded increasingly ordered coefficients up to $M=4$.
For larger values of $M$,
we observed that we can fit a power-law of the form $\alpha t^{\beta}$ on $[0, 1]$
through the points $(\tau_i^M, Md_i^M)$, $i=1, \ldots, M$, where $\vect{d}^M$ are the 
increasingly ordered coefficients that minimize $\rho(\KS)$, and evaluate it to produce a starting value for $M+1$. 
Hence, we propose the following incremental procedure to provide good starting values for the optimization to compute $\vect{d}^{M+1}$.
Assuming that $\vect{d}^{M}$ is known:
\begin{enumerate}
    \item find values for $\alpha$ and $\beta$ such that the power-law minimizes the distance to the points 
        $(\tau_i^M, Md_i^M)$ in the $L_2$ norm,
    \item compute $\widetilde{\vect{d}}^{M+1} = \alpha t^\beta / (M+1)$
    for $t \in \vect{\node}^{M+1}$,
    \item find a numerical solution for \eqref{eq:MIN-SR-S} using
    $\widetilde{\vect{d}}^{M+1}$ as initial guess.
\end{enumerate}
Iterating this process up to a desired $M$ yielded increasingly ordered coefficients with a very
small $\rho(\KS)$ in all numerical experiments.

\begin{remark}\label{rem:zeroNode}
The assumption $\node_1 \neq 0$ in Definition~\ref{def:MIN-SR-S} guarantees that $\QDelta$ is not singular.
If the first collocation node is zero, \eg for \Lobatto nodes, the collocation matrix takes the form
\begin{equation}\notag
    \Q =
    \begin{bmatrix}
        x & \vect{y}^\top\\
        \vect{q} & \widetilde{\Q}
    \end{bmatrix},
\end{equation}
where $\vect{y}, \vect{q} = [\vect{q}_1,  \dots, \vect{q}_{M-1}]^\top \in \R^{M-1}$,
and $x \in \R$.
Then,~\eqref{eq:linear_collocation_problem} can be rewritten as
\begin{equation}\notag
    \begin{bmatrix}
        u_2 \\
        \vdots \\
        u_M
    \end{bmatrix}
    - \widetilde{\Q}
    \begin{bmatrix}
        f(u_2) \\
        \vdots \\
        f(u_M)
    \end{bmatrix} =
    \begin{bmatrix}
        u_0 + \vect{q}_1f(u_0) \\
        \vdots \\
        u_0 + \vect{q}_{M-1}f(u_0)
    \end{bmatrix}
\end{equation}
since $u(\node_1) = u_0$.
The matrix $\widetilde{\Q}$ is still a collocation matrix, but now based on the nodes $0 < \node_2 < \dots < \node_M$.
We can apply the approach described in Definition~\ref{def:MIN-SR-S} for $\widetilde{\Q}$ to determine a diagonal $\widetilde{\Q}_\Delta$ and add a zero coefficient to build the diagonal $\QDelta$ preconditioner for the original node distribution.
\end{remark}

\subsubsection{\MINSRFLEX preconditioning}
%\subsubsection{Variable preconditioning in the stiff case}
\label{sec:dnodes}
The discussion in §\ref{sec:det} illustrates the difficulty of finding a single set of diagonal coefficients that minimize $\rho(\KS)$ analytically.
Therefore, here we consider a series of preconditioners that change from one iteration to the next, as was already suggested by Weiser~\cite[Sec.~4.2]{weiser2015faster}.
Let $\Qd^{(k)}$ be the preconditioner used in the $k^{\rm{th}}$ iteration of SDC.
Telescoping the error iteration~\eqref{eq:error_iterations} gives
\begin{equation}\label{eq:error_iterations_var}
\e^{k} = \K^{(k)}(z) \dots \K^{(1)}(z) \e^{0}, \quad
\K^{(k)}(z) = z \left(\I - z \Qd^{(k)}\right)^{-1}\left(\Q - \Qd^{(k)}\right).
\end{equation}
Using the same calculation as for the iteration matrix in the stiff limit $\KS$,
we get
\begin{equation}\notag
\lim_{|z|\rightarrow \infty}\e^{k} = \KS^{(k)}\dots \KS^{(1)}\e^{0}, \quad
\KS^{(k)} = \lim_{|z|\rightarrow \infty}\K^{(k)}(z) =
\I - \left(\Qd^{(k)}\right)^{-1}\Q.
\end{equation}
We use this result to introduce the following $k$-dependent preconditioning.
\begin{theorem}\label{th:QdiQ-I}
    For any set of nodes with $\node_1 > 0$, we have
    \begin{equation}\notag
        \left(\I - \Q_{\Delta, M}^{-1} \Q \right)
        \dots
        \left(\I - \Q_{\Delta, 2}^{-1} \Q \right)
        \left(\I - \Q_{\Delta, 1}^{-1} \Q \right) = \vect{0},
    \end{equation}
    where $\Qdm{m}$ is defined in \eqref{eq:Qdm}.
    Hence, using successive diagonal preconditioning
    $\QDelta^{(k)}=\Qdm{k}$ with
    $k\in\{1,\dots,M\}$ provides SDC iterations
    such that $\lim_{|z|\rightarrow \infty}\e^{k}=0$,
    with $\e^{k}$ being the iteration error defined in
    \eqref{eq:error_iterations_var}.
    We call this preconditioning \MINSRFLEX.
\end{theorem}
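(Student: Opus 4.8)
The plan is to find a single basis of $\R^{M}$ in which all $M$ factors $\I-\Qdm{m}^{-1}\Q$ are simultaneously diagonal, so that the product can be read off at once. Since $0<\node_1<\dots<\node_M\le 1$ forces every node to be positive, each $\Qdm{m}=\operatorname{diag}(\node_1/m,\dots,\node_M/m)$ is invertible, and the power vectors $\vect{\tau}^0,\vect{\tau}^1,\dots,\vect{\tau}^{M-1}\in\R^M$ from Proposition~\ref{prop:monomials} are linearly independent (the matrix with entries $\node_j^{\,n}$ is a Vandermonde matrix with distinct nodes), hence a basis of $\R^M$. So it suffices to compute how each factor acts on an arbitrary $\vect{\tau}^n$.

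The computation that does the work is $\Q\vect{\tau}^n=\frac{1}{n+1}\vect{\tau}^{n+1}$ for $0\le n\le M-1$, which is just $[\Q\vect{\tau}^n]_m=\int_0^{\node_m}s^n\,ds=\node_m^{n+1}/(n+1)$, valid because nodal interpolation reproduces $s^n$ exactly for $n\le M-1$. Multiplying by $\Qdm{m}^{-1}=\operatorname{diag}(m/\node_1,\dots,m/\node_M)$ then gives $\Qdm{m}^{-1}\Q\vect{\tau}^n=\frac{m}{n+1}\vect{\tau}^n$, so that $(\I-\Qdm{m}^{-1}\Q)\vect{\tau}^n=\frac{n+1-m}{n+1}\vect{\tau}^n$. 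I would record this as: each $\vect{\tau}^n$, $0\le n\le M-1$, is a common eigenvector of all $M$ factors. Consequently their product acts on $\vect{\tau}^n$ by the scalar $\prod_{m=1}^{M}\frac{n+1-m}{n+1}$, which vanishes for every $n\in\{0,\dots,M-1\}$ because the factor with $m=n+1\in\{1,\dots,M\}$ is zero. Since the $\vect{\tau}^n$ span $\R^M$, the product is the zero matrix, which is exactly the stated identity. The \MINSRFLEX conclusion then follows by recalling that for $\QDelta^{(k)}=\Qdm{k}$ one has $\KS^{(k)}=\I-\Qdm{k}^{-1}\Q$, and substituting the identity into the telescoped stiff-limit error from \eqref{eq:error_iterations_var} to get $\lim_{|z|\to\infty}\e^{M}=\KS^{(M)}\cdots\KS^{(1)}\e^{0}=\vect{0}$.

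I expect the only genuine subtlety to be the top-degree case $n=M-1$, where the intermediate vector $\Q\vect{\tau}^{M-1}=\frac1M\vect{\tau}^{M}$ is the $(M{+}1)$st power vector and is \emph{not} the image under $\Phi^{-1}$ of the monomial $\tau^{M}$ (Proposition~\ref{prop:Q} explicitly excludes $n=M-1$ for this reason). The point to make carefully is that only node values matter: $\int_0^{\node_m}s^{M-1}\,ds=\node_m^{M}/M$ is still exact, and after the diagonal rescaling by $\Qdm{m}^{-1}$ one lands on $\frac mM\vect{\tau}^{M-1}$, genuinely the power vector $\vect{\tau}^{M-1}$, so the eigenvalue formula $\frac{n+1-m}{n+1}$ extends uniformly to $n=M-1$. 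Everything else is elementary linear algebra. As an alternative I would note that the argument transports verbatim to $P_{M-1}$ through the isomorphism $\Phi$, using Propositions~\ref{prop:Q}, \ref{prop:Qdminv} and, for the top degree, Proposition~\ref{prop:monomials}; in that language the result says that the $k$-th \MINSRFLEX sweep annihilates precisely the degree-$(k{-}1)$ component of the stiff-limit error, in the spirit of Remark~\ref{rem:errorTerms}.
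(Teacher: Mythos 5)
Your proof is correct, and it reaches the identity by a somewhat different route than the paper. The key computation is the same in both cases — the relation $\Qdm{m}^{-1}\Q\,\vect{\tau}^n = \tfrac{m}{n+1}\vect{\tau}^n$, i.e.\ monomial data are eigenvectors of every factor with eigenvalue $1-\tfrac{m}{n+1}$ — but you exploit it differently. The paper transports the problem to $P_{M-1}$ via the isomorphism $\Phi$ and runs a degree-filtration induction: applying $\I-\Qdm{1}^{-1}\Q,\ \I-\Qdm{2}^{-1}\Q,\dots$ in that order successively removes the constant, linear, \dots components of an arbitrary polynomial until nothing is left, which directly yields the interpretation (in the spirit of Remark~\ref{rem:errorTerms}) that sweep $k$ eliminates the degree-$(k-1)$ part of the stiff-limit error. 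You instead stay in $\R^M$, observe that $\vect{\tau}^0,\dots,\vect{\tau}^{M-1}$ form a Vandermonde basis of common eigenvectors of all $M$ factors, and conclude because the eigenvalue product $\prod_{m=1}^{M}\tfrac{n+1-m}{n+1}$ always contains the zero factor $m=n+1$. This buys two small things: the conclusion holds for the factors applied in \emph{any} order (they are simultaneously diagonalized, indeed $\Qdm{m}^{-1}\Q = m\,\Qdm{1}^{-1}\Q$, so they commute), and the top-degree case $n=M-1$ is handled transparently, since $\Q\vect{\tau}^{M-1}=\tfrac1M\vect{\tau}^{M}$ is a plain componentwise identity in $\R^M$ and no identification of $\vect{\tau}^{M}$ with the monomial $\tau^M$ is needed — a point you were right to flag, and which the paper's polynomial-side argument has to route through Proposition~\ref{prop:Qdminv} (where $\qdm{m}^{-1}$ first samples at the nodes). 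What the paper's ordering-based argument buys in exchange is the explicit error-reduction interpretation per sweep; your final remark recovers the same picture, with the mild caveat that each sweep annihilates the degree-$(k-1)$ component while merely rescaling the higher ones.
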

\begin{proof}
Since $\Q \cong Q$, $\Qdm{m}^{-1} \cong \qdm{m}^{-1}$ and $\I \cong I$, we have
\begin{equation}
    \label{eq:qdqiso}
 \left(\I - \Q_{\Delta, m}^{-1} \Q \right) \cong \left(I - \qdm{m}^{-1}Q\right).
\end{equation}
For any monomial $\tau^n \in P_{M-1}$, Proposition~\ref{prop:monomials} yields
\begin{align}
    \left( I - \qdm{m}^{-1} Q \right) \tau^n &= \tau^n - \qdm{m}^{-1} Q \Phi \vect{\tau}^n.
\end{align}
With the help of equation~\eqref{eq:Qphitau_n} we find that
\begin{equation}
    \left( I - \qdm{m}^{-1} Q \right) \tau^n = \tau^n - \qdm{m}^{-1} \frac{\tau^{n+1}}{n+1}
\end{equation}
Finally, Proposition~\ref{prop:Qdminv} gives us
\begin{equation}
     \left( I - \qdm{m}^{-1} Q \right) \tau^n = \tau^n - \frac{m \tau^n}{n+1} = \left(1 - \frac{m}{n+1} \right) \tau^n
\end{equation}
If $m = n+1$, the right hand side is zero and therefore $\tau^{n} \in \ker(I - \qdm{n+1}^{-1})$.

Let $p \in P_{M-1}$, $p(t) = \sum_{j=0}^{M-1} p_j t^j$ be an arbitrary polynomial of degree $M-1$.
Then
\begin{equation}
    \left( I - \qdm{1}^{-1} Q \right) p(t) = \frac{1}{2} p_1 t + \frac{2}{3} p_2 t^2 + \ldots + \frac{M-1}{M} p_{M-1}t^{M-1} \in \left( P_{M-1} \setminus P_0 \right) \cup \{ 0 \}
\end{equation}
Applying $I - \qdm{2}^{-1}Q$ removes the linear term so that
\begin{equation}
    \left( I - \qdm{2}^{-1} Q \right) \left( I - \qdm{1}^{-1} Q \right) p \in \left( P_{M-1} \setminus P_1 \right) \cup \{ 0 \}.
\end{equation}
Repeating this until $k = M$ yields
\begin{equation}\label{eq:dnodek}
    \left(I-\qdm{M-2}^{-1}Q\right)
    \circ\dots\circ
    \left(I-\qdm{2}^{-1}Q\right)
    \circ
    \left(I-\qdm{1}^{-1}Q\right)(p)
    \in P_{M-1} \setminus P_{M-1} \cup \{0 \},
\end{equation}
that is, $p$ gets mapped to the zero polynomial. 
Since $p$ was arbitrary, the mapping must be the zero mapping.
Using~\eqref{eq:qdqiso} shows that
\begin{equation}\notag
\left(\I - \Q_{\Delta, M}^{-1} \Q \right)
\dots
\left(\I - \Q_{\Delta, 2}^{-1} \Q \right)
\left(\I - \Q_{\Delta, 1}^{-1} \Q \right) = \vect{0}.
\end{equation}
\end{proof}

Note that a similar observation to what was stated in Remark~\ref{rem:errorTerms} concerning
the error reduction can be made for the \MINSRFLEX preconditioner from Theorem~\ref{th:QdiQ-I}.
As before, Remark~\ref{rem:zeroNode} holds \MINSRFLEX preconditioners for any node distribution with
$\node_1=0$.
Theorem~\ref{th:QdiQ-I} defines the preconditioner for a fixed number of iterations $k \leq M$ and lets the user choose the coefficients for $k>M$.
Since this preconditioning is tailored to stiff problems, we use the \MINSRS preconditioning from Definition~\ref{def:MIN-SR-S} in combination with \MINSRFLEX when $k>M$ in all numerical experiments.

\section{Convergence order and stability}
\label{sec:convStability}
This section investigates the convergence order and stability of parallel SDC with a fixed number of sweeps.
This approach is easier to compare against classical RKM than using SDC with a residuum-based stopping criterion~\cite{speck2018parallelizing}.
We use nodes from a Legendre distribution but the reader can generate plots for other choices with the provided code~\cite{speck2024pySDC}.
In all cases, the initial solution $\vect{u}^0$ for the SDC iteration is generated by copying the initial value $u_0$ to all nodes.

\subsection{Convergence order}
As a rule of thumb, SDC's order of converge increases by at least one per iteration.
This has been proved for implicit and explicit Euler methods as sweepers~\cite[Thm.~4.1]{dutt2000spectral} as well as for higher order correctors \cite[Thm.~3.8]{christlieb2009comments}.
However, except for equidistant nodes, sweepers of higher order are not guaranteed to provide more than one order per sweep.
For \LU-SDC there is numerical evidence that it increases order by one per sweep~\cite{weiser2015faster} but no proof seems to exist.
Van der Houwen et al.~\cite[Thm.~2.1]{houwen1992embedded} show that for any diagonal preconditioner $\QDelta$ the order of the method after $K$ sweeps is $\min(K, p^*)$, where $p^*$ is the order of the underlying collocation method.
We confirm this numerically for the Dahlquist test problem~\eqref{eq:dahlquist} with $\lambda = i$ and $T=2\pi$.

\def\captionText#1{
    Convergence of SDC for the Dahlquist test equation with #1 preconditioner
    using $K=1,\ldots,M$ sweeps per per time step.
    Dashed lines with slopes one to $K$ are shown as a guide to the eye.
    Left: $M=4$ \RadauRight nodes, right: $M=5$ \Lobatto nodes.
}
\begin{figure}[!htb]
    \label{fig:conv_MIN-SR-NS}
    \includegraphics[width=0.48\linewidth]{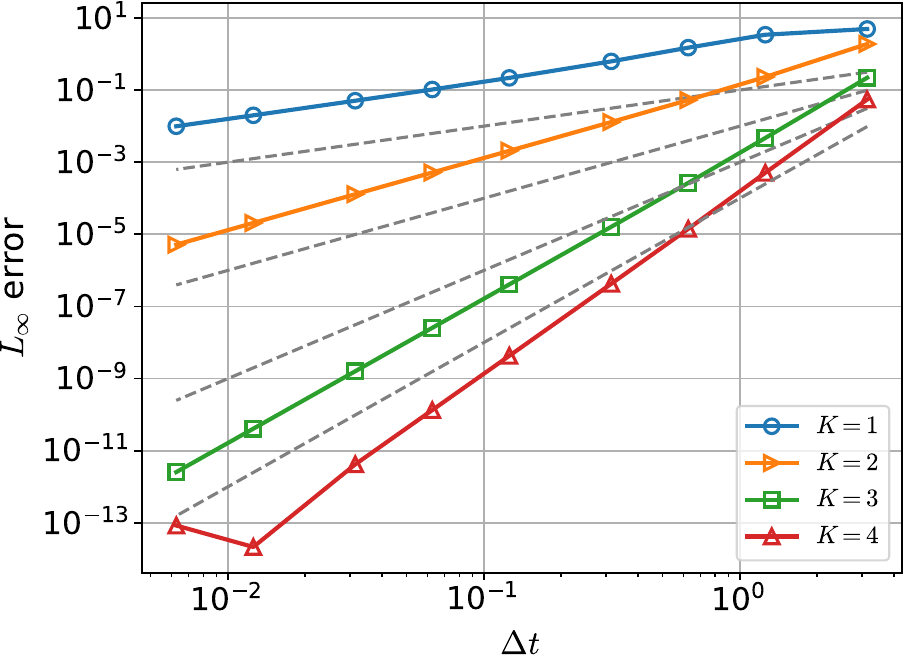}%
    \hfill%
    \includegraphics[width=0.48\linewidth]{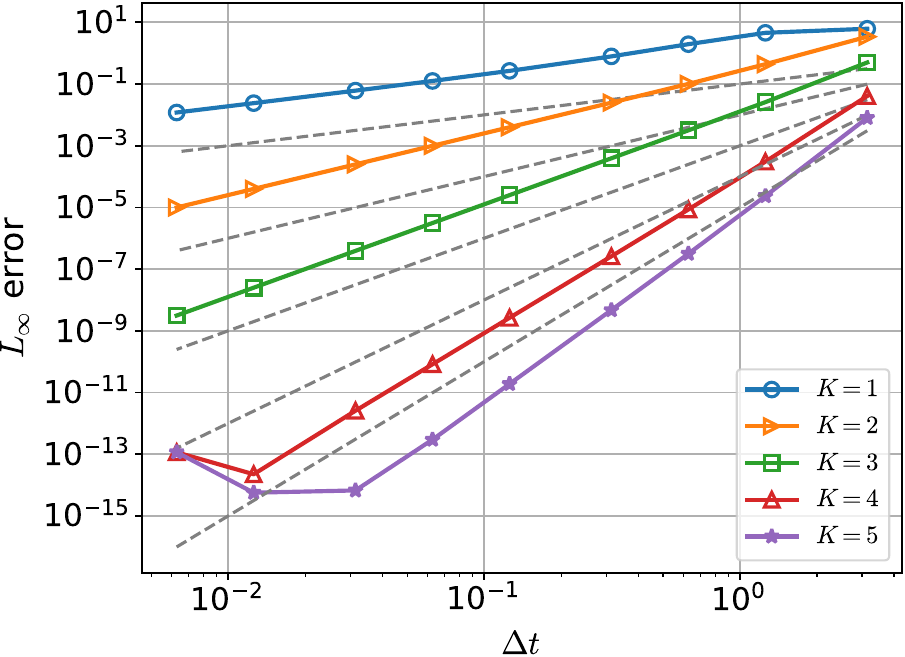}%
    \caption{\captionText{\MINSRNS}}
\end{figure}
Figure~\ref{fig:conv_MIN-SR-NS} shows the $L_{\infty}$-error against the analytical solution for SDC with the \MINSRNS preconditioner with $M=4$ \RadauRight nodes (left) and $M=5$ \Lobatto nodes (right)
for $K=1,2,\dots,M$ sweeps.
The underlying collocation methods are of order 7 and 8 so that the order of SDC is determined by $K$.
For \RadauRight nodes, SDC gains one order per sweep for $K=1$ and $K=2$ while the third sweep increases the order by two.
The same happens for \Lobatto nodes when going from $K=3$ to $K=4$ sweeps.
This unexpected order gain has also been observed for other configurations of the
\MINSRNS preconditioner but we do not yet have a theoretical explanation.
Increased order of \MINSRNS is also observed for more complex problems, see \S\ref{sec:efficiencyLorenz}.

\begin{figure}
    \label{fig:conv_MIN-SR-S}
    \includegraphics[width=0.48\linewidth]{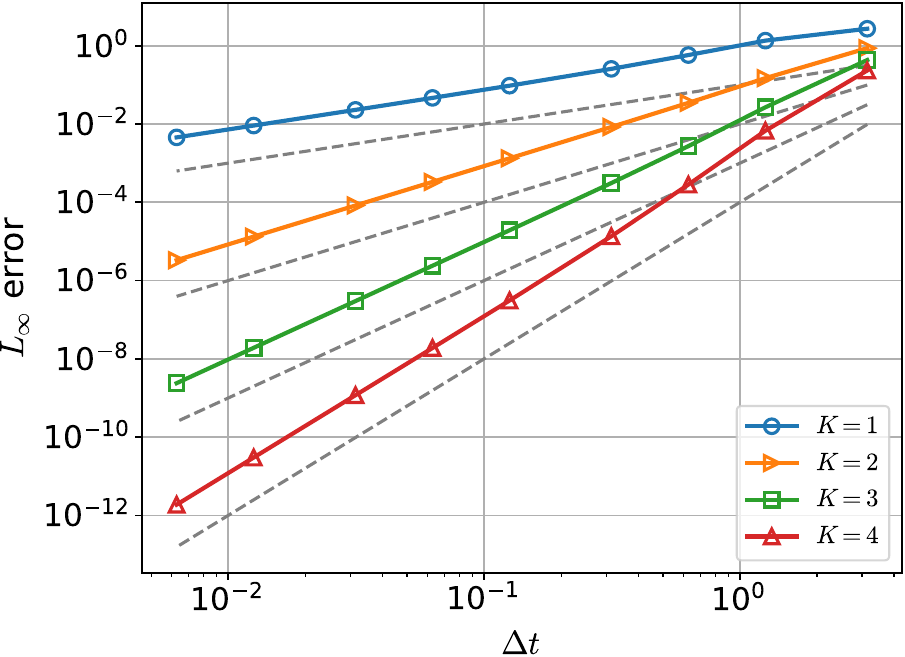}%
    \hfill%
    \includegraphics[width=0.48\linewidth]{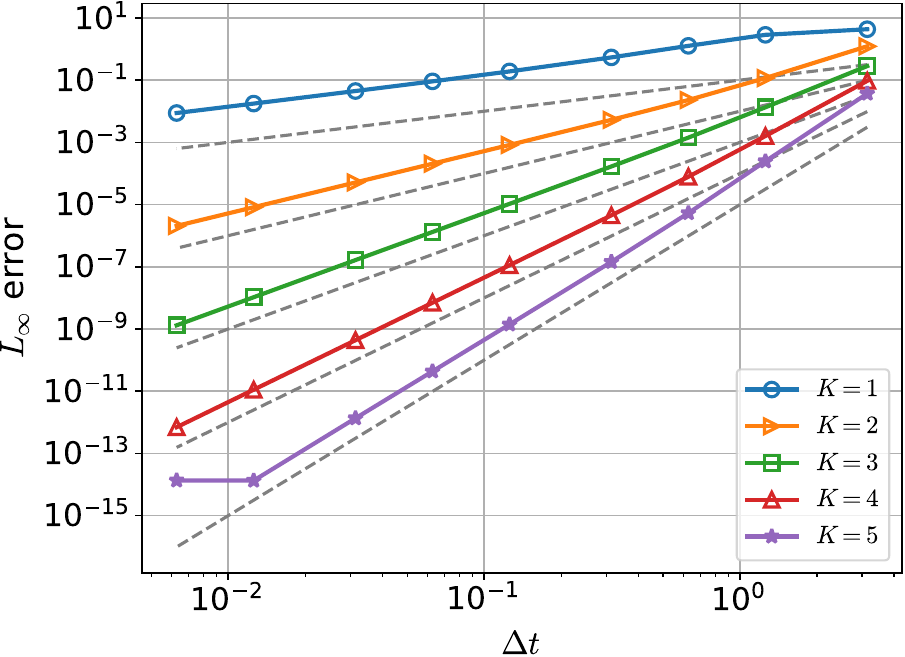}%
    \caption{\captionText{\MINSRS}}
\end{figure}

\begin{figure}
    \label{fig:conv_MIN-SR-FLEX}
    \includegraphics[width=0.48\linewidth]{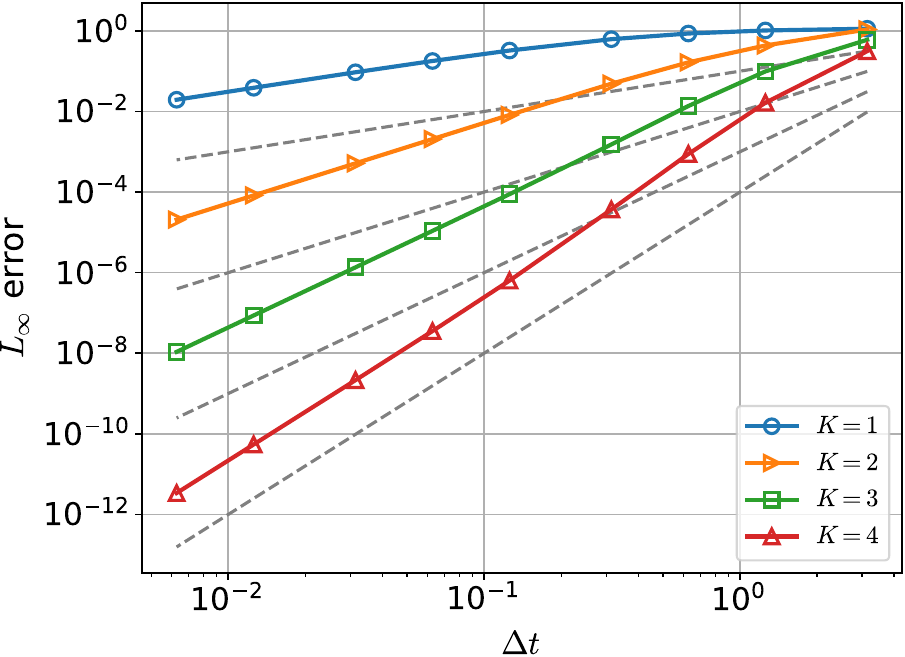}%
    \hfill%
    \includegraphics[width=0.48\linewidth]{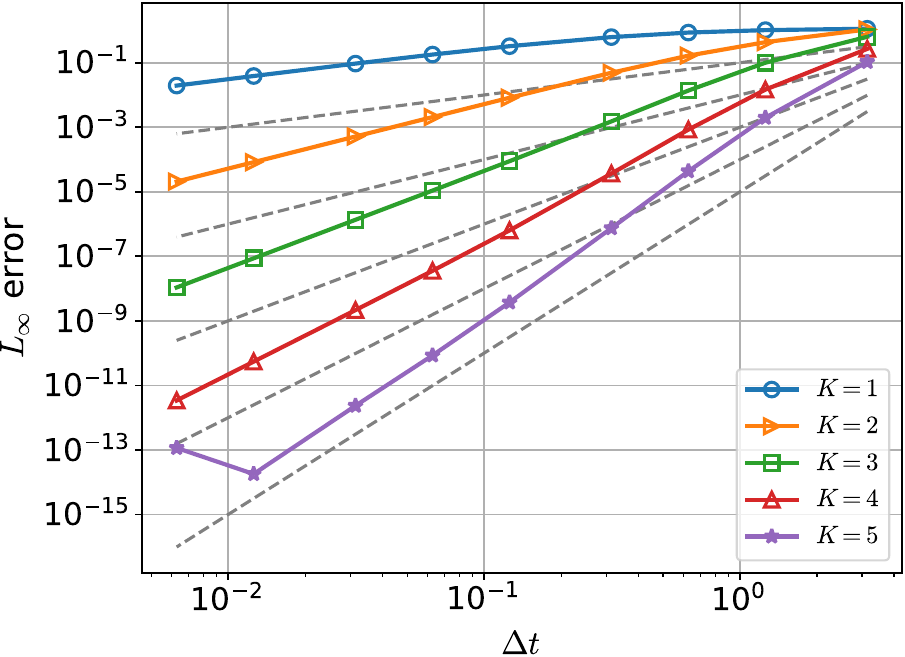}%
    \caption{\captionText{\MINSRFLEX}}
\end{figure}

Figure~\ref{fig:conv_MIN-SR-S} shows convergence of SDC with \MINSRS preconditioner while Figure~\ref{fig:conv_MIN-SR-FLEX} shows convergence for \MINSRFLEX.
In both cases the order increases by one per sweep but without the additional gains we oberserved for \MINSRNS.
Also, errors for \MINSRS and \MINSRFLEX are generally higher than for \MINSRNS.
This is expected as \MINSRNS is optimized for non-stiff problems and the Dahlquist problem with $\lambda = i$ is not stiff.
Note that no results seem to exist in the literature that analyze convergence order of a nonstationary SDC iteration like \MINSRFLEX where the preconditioner changes in every iteration.

\subsection{Numerical stability}
\label{sec:numStability}

We investigate stability of parallel SDC by numerically computing the border of its stability region
\begin{equation}
    \mathcal{S}_C = \left\{z \in \C \;\text{s.t}\; |R(z)| = 1 \right\},
\end{equation}
where $R$ is the stability function of a given SDC configuration.
A method is called A-stable if $\mathcal{S}_C \subset \mathbb{C}^{-}$, that is, if the stability domain includes the negative complex half-plane.
We use $M=4$ \RadauRight nodes from a Legendre distribution for all experiments. 
Other configurations can again be analyzed using the provided code.

\def\captionText#1{
    Stability region for SDC with $M=4$ \RadauRight nodes using the #1 preconditioner for $K=1,2,3,4$.
    The gray zones are unstable areas in the complex plane.
}
\begin{figure}
    \label{fig:stab_PIC}
    \includegraphics[width=0.24\linewidth]{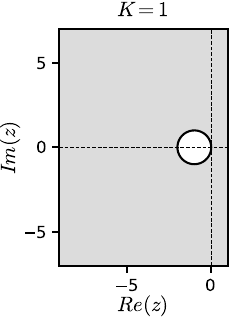}%
    \hfill%
    \includegraphics[width=0.24\linewidth]{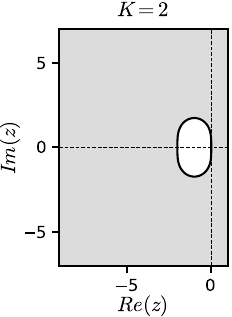}%
    \hfill%
    \includegraphics[width=0.24\linewidth]{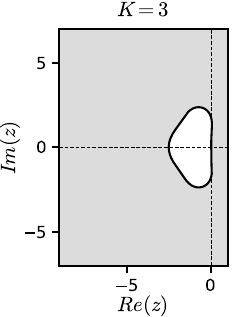}%
    \hfill%
    \includegraphics[width=0.24\linewidth]{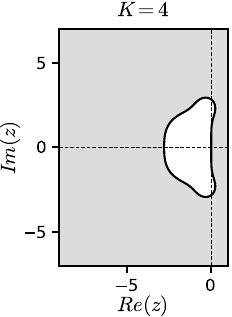}%
    \caption{\captionText{\PIC}}
\end{figure}
Figure~\ref{fig:stab_PIC} shows stability for SDC with \PIC preconditioner
(Picard iteration).
This is a fully explicit method and, as expected, not $A$-stable for any $K$.
Interestingly, $K$-many sweeps reproduce the stability contour of the explicit RKM of order $K$ with $K$ stages.
In particular, we recognize the stability regions of explicit Euler for $K=1$ and of the classical explicit RKM of order $4$ for $K=4$.

\begin{figure}
    \label{fig:stab_MIN-SR-NS}
    \includegraphics[width=0.24\linewidth]{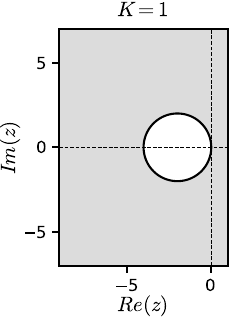}%
    \hfill%
    \includegraphics[width=0.24\linewidth]{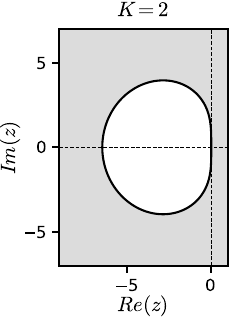}%
    \hfill%
    \includegraphics[width=0.24\linewidth]{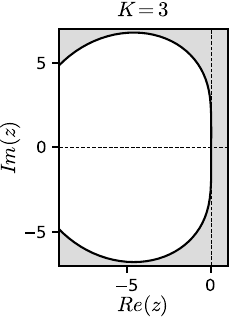}%
    \hfill%
    \includegraphics[width=0.24\linewidth]{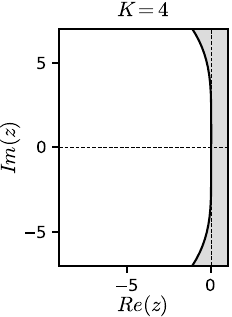}%
    \caption{\captionText{\MINSRNS}}
\end{figure}
Figure~\ref{fig:stab_MIN-SR-NS} shows stability regions for \MINSRNS.
As for the Picard iteration, the method is not $A$-stable for any number of sweeps 
but the stability regions are significantly larger.
That \MINSRNS is not $A$-stable is not unexpected since the coefficients are optimized
for the non-stiff case where $|z| \simeq 1$.

\begin{figure}
    \label{fig:stab_MIN-SR-S}
    \includegraphics[width=0.24\linewidth]{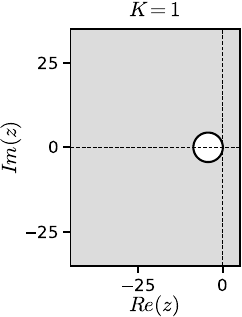}%
    \hfill%
    \includegraphics[width=0.24\linewidth]{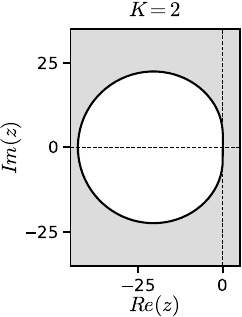}%
    \hfill%
    \includegraphics[width=0.24\linewidth]{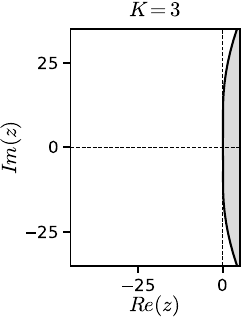}%
    \hfill%
    \includegraphics[width=0.24\linewidth]{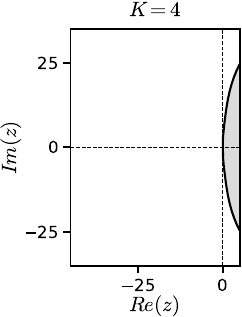}%
    \caption{\captionText{\MINSRS}Note that $\lambda$ ranges are five times larger than
        for the previous Figures~\ref{fig:stab_PIC} and \ref{fig:stab_MIN-SR-NS}.}
\end{figure}

\begin{figure}
    \label{fig:stab_MIN-SR-FLEX}
    \includegraphics[width=0.24\linewidth]{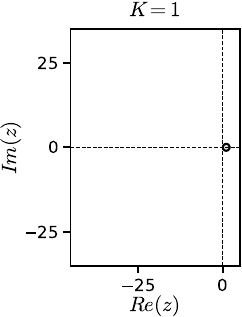}%
    \hfill%
    \includegraphics[width=0.24\linewidth]{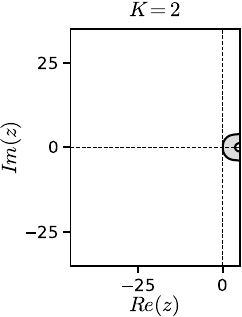}%
    \hfill%
    \includegraphics[width=0.24\linewidth]{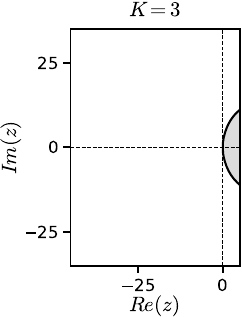}%
    \hfill%
    \includegraphics[width=0.24\linewidth]{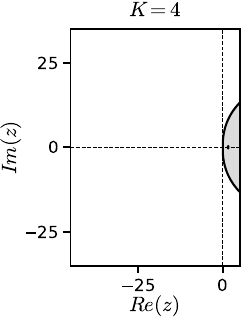}%
    \caption{\captionText{\MINSRFLEX}}
\end{figure}
Figures~\ref{fig:stab_MIN-SR-S} and \ref{fig:stab_MIN-SR-FLEX} show stability regions for \MINSRS and \MINSRFLEX.
While SDC with \MINSRS appears to be $A$-stable for $K \geq 3$,
SDC with \MINSRFLEX seems $A$-stable for any number of sweeps $K$.
However, this only holds for \RadauRight nodes.
For example, SDC with $M=5$ \Lobatto nodes and $K=4$ sweeps is not $A$-stable (stability regions not shown here).
A theoretical investigation is left for later work.
\begin{figure}
    \label{fig:stab_LU}
    \includegraphics[width=0.24\linewidth]{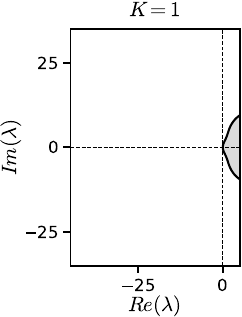}%
    \hfill%
    \includegraphics[width=0.24\linewidth]{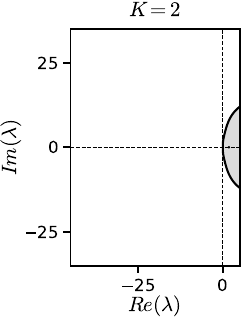}%
    \hfill%
    \includegraphics[width=0.24\linewidth]{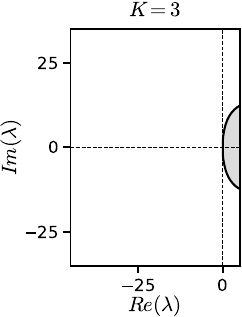}%
    \hfill%
    \includegraphics[width=0.24\linewidth]{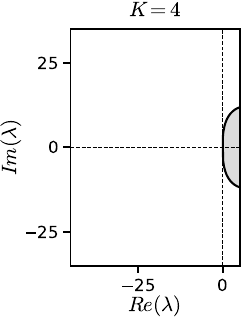}%
    \caption{\captionText{\LU}Note that the stability regions for
        \IE-SDC are very similar.}
\end{figure}

For comparison, we show the stability contours of the $LU$ preconditioner by Weiser~\cite{weiser2015faster} in Figure~\ref{fig:stab_LU}.
Stability for SDC with a standard implicit Euler sweeper is very similar and not shown.
Note that the \LU preconditioner is lower triangular and does not allow for parallelism in the sweep.
Since stability regions of \MINSRFLEX and \LU are similar, we can conclude that, with an optimized choice of coefficients, parallelism in SDC can be obtained without loss of stability.
For further comparison, we show the stability contours for the \HOUWENSOMMEIJER preconditioner, obtained by minimizing the spectral radius of $\Imat-\Qdi\Qmat$~ \cite[Sec.~4.3.4]{houwen1991iterated} in Figure~\ref{fig:stab_VDHS}.
For $K\in \{1, 2, 3\}$ the stability regions are very small compared to \LU or \MINSRFLEX. Even though the stable regions grows as $K$ increases, it does not include the imaginary axis, making the method unsuitable for oscillatory problems with purely imaginary eigenvalues.
Bounded stability regions are also observed for the MIN preconditioner~\cite{speck2018parallelizing} (not shown).
Since the spectral radius of $\Imat-\Qdi\Qmat$ is significantly larger for \texttt{MIN} than \MINSRS and \HOUWENSOMMEIJER, we do not consider it in the rest of this paper.

\begin{figure}
    \label{fig:stab_VDHS}
    \includegraphics[width=0.24\linewidth]{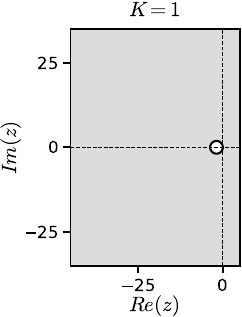}%
    \hfill%
    \includegraphics[width=0.24\linewidth]{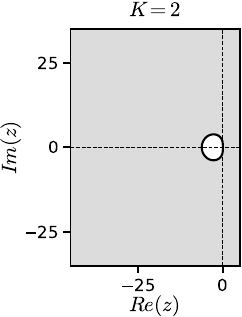}%
    \hfill%
    \includegraphics[width=0.24\linewidth]{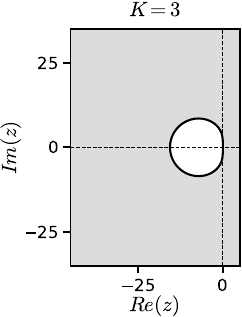}%
    \hfill%
    \includegraphics[width=0.24\linewidth]{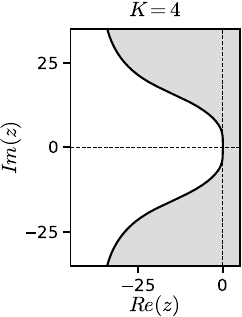}%
    \caption{\captionText{\HOUWENSOMMEIJER}}
\end{figure}

\begin{remark}
Experiments not documented here suggest that when minimizing the spectral radius of the stiff limit $\Imat-\Qdi\Qmat$, enforcing monotonically increasing coefficients for \MINSRS provides a notable
improvement in numerical stability.
\end{remark}

\section{Computational efficiency}
\label{sec:compEfficiency}

We compare computational cost versus accuracy of our three new SDC preconditioners against SDC preconditioners from the literature, the classical explicit $4^{th}$ order (RK4) and the $L$-stable stiffly accurate implicit RKM ESDIRK4(3)6L[2]SA~\cite{kennedy2016diagonally} (ESDIRK43).
The two RKM were implemented in \texttt{pySDC} by Baumann et al.~\cite{baumann2042pursing}.
Again all figures in this section can be reproduced using scripts in the provided code~\cite{speck2024pySDC}.

\subsection{Estimating computational cost}
\label{sec:costEstimation}
A fair run-time assessment of parallel SDC requires an optimized parallel implementation
which is the subject of a separate work~\cite{freese2024parallel}.
Here we instead estimate computational cost by considering the
elementary operations of each scheme.

To solve a system of ODEs~\eqref{eq:ODEsys},  both SDC and RKM need to
\begin{enumerate}
    \item evaluate the right-hand-side (RHS) $f(u, t)$ for given $u$, $t$ and
    \item solve the following non-linear system for some $b$, $t$ and $\alpha$
    \begin{equation}\label{eq:systemSolve}
        u - \alpha f(u, t) = b.
    \end{equation}
\end{enumerate}
We solve~\eqref{eq:systemSolve} with an exact Newton iteration, starting from the initial solution $u_0$ or the previous SDC iterate $u_m^{k-1}$.
We stop iterating when a set problem-dependent tolerance is reached or after $300$ iterations.
In our numerical experiments, the Jacobian $J_{f}$ can be computed and $I-\alpha J_{f}$ is inverted analytically.
In this case, the cost of one Newton iteration is similar to the cost of a
RHS evaluation.
For all methods, we therefore model that computational cost of a simulation by
$N_\text{Newton} + N_\text{RHS}$.
For the Allen-Cahn problem, since the cost of one Newton iteration is bigger than this
of one RHS evaluation, we model the computation cost in this case by $2N_\text{Newton} + N_\text{RHS}$.
Note that $N_{Newton}=0$ for explicit methods like RK4 or \PIC-SDC.

For parallel SDC, the computations in every sweep can be parallelized across $M$ threads.
To account for unavoidable overheads from communication or competition for resources between threads, we assume a parallel efficiency of $P_\text{eff}=0.8$ or $80 \%$ which is achievable in optimized implementations using compiled languages and \texttt{OpenMP}~\cite{freese2024parallel}.
Thus, in our performance model, we divide the computational cost
estimate for SDC by $M P_\text{eff}$ instead of $M$.
The cost model is validated below against wallclock time measurements for the Allen-Cahn problem using MPI parallelization.
Parallel efficiences there range between $74\%$ and $93\%$, depending on time step size, providing additional evidence that our assumption of $80\%$ efficiency is reasonable.
No runtimes are shown for the Lorenz or Prothero-Robinson problem but can be generated with the provided code
\cite{speck2024pySDC}.

\subsection{Lorenz system}
\label{sec:efficiencyLorenz}
We first consider the non-linear system of ODEs
\begin{equation}
    \frac{d x}{dt} = \sigma (y - x),\quad
    \frac{d y}{dt} = x(\rho-z)-y,\quad
    \frac{d z}{dt} = xy-\beta z,
\end{equation}
with $(\sigma, \rho, \beta)=(10, 28, 8/3)$.
The initial value is $(x(0), y(0), z(0)) = (5,-5,20)$ and we use a Newton tolerance of $10^{-12}$.
We set the final time for the simulation to $T=1.24$, which corresponds to two revolutions around one of the attraction points.
A reference solution is computed using an embedded RKM of order 5(4)~\cite{dormand1980family} implemented in \texttt{scipy} with an error tolerance of $10^{-14}$.

\begin{figure}
    \label{fig:lorenzError}
    \includegraphics[width=0.48\linewidth]{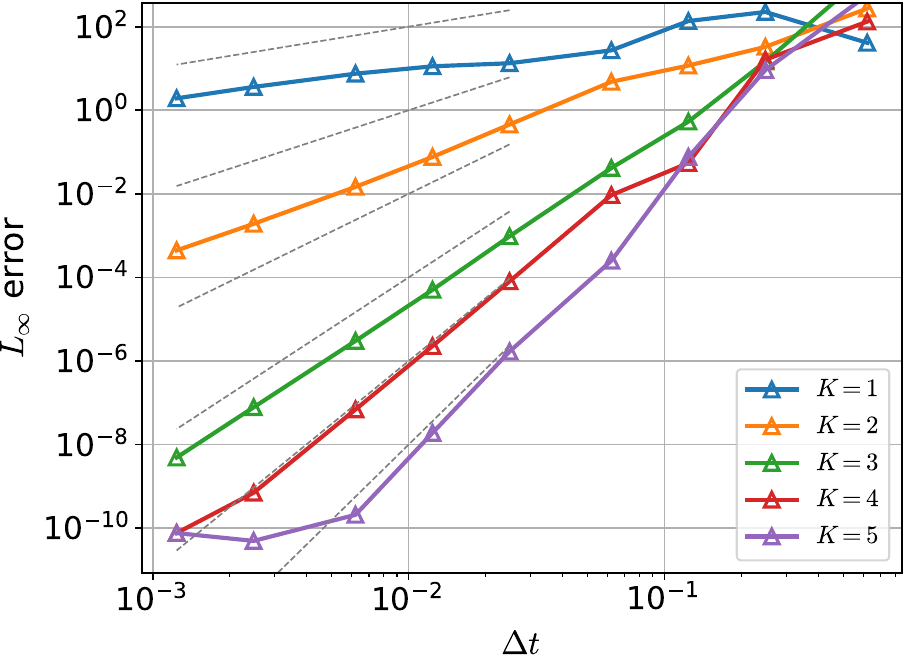}%
    \hfill%
    \includegraphics[width=0.48\linewidth]{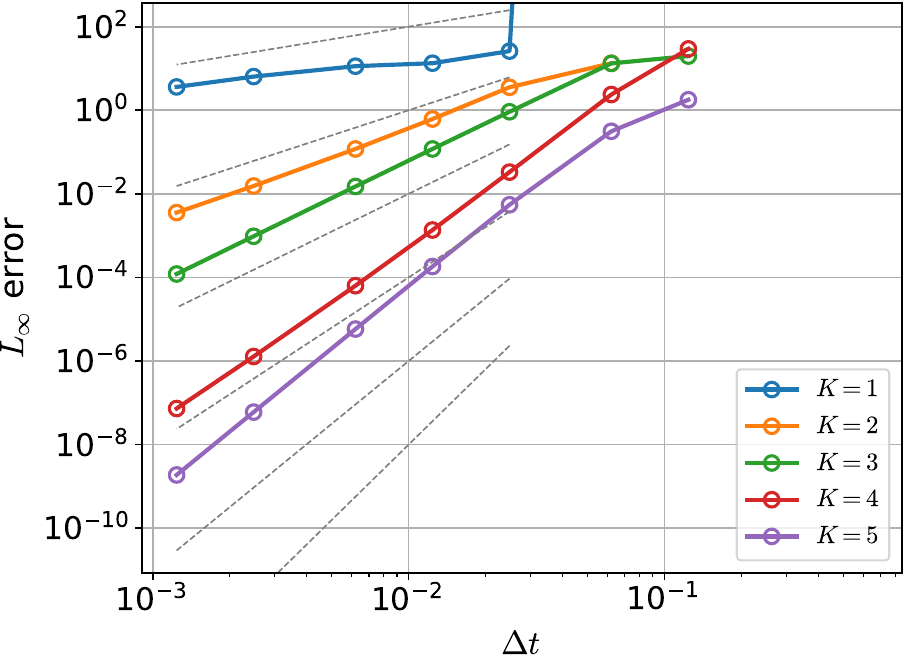}%
    \caption{Error vs.\ time step size for SDC for the Lorenz problem using $M=4$ \RadauRight nodes,
        for $K=1, \ldots, 5$ sweeps per time step.
        Left: \MINSRNS preconditioner, right: \PIC preconditioner.
        Dashed gray lines with slopes from 1 to 6 are shown as a guide to the eye.}
\end{figure}
Figure~\ref{fig:lorenzError} shows error versus time step size for \MINSRNS for SDC for $K = 1, \ldots, 5$.
Since the Lorenz system is not stiff, we compare against the Picard iteration (\PIC), which is known to be efficient for non-stiff problems.
We do see the expected order increase by one per iteration as well as the additional order gain for \MINSRNS starting at $K=3$ seen already for the Dahlquist problem.
For the same number of sweeps, this makes \MINSRNS more accurate than Picard iteration or other SDC methods.

\begin{figure}
    \label{fig:lorenzCost}
    \includegraphics[width=0.48\linewidth]{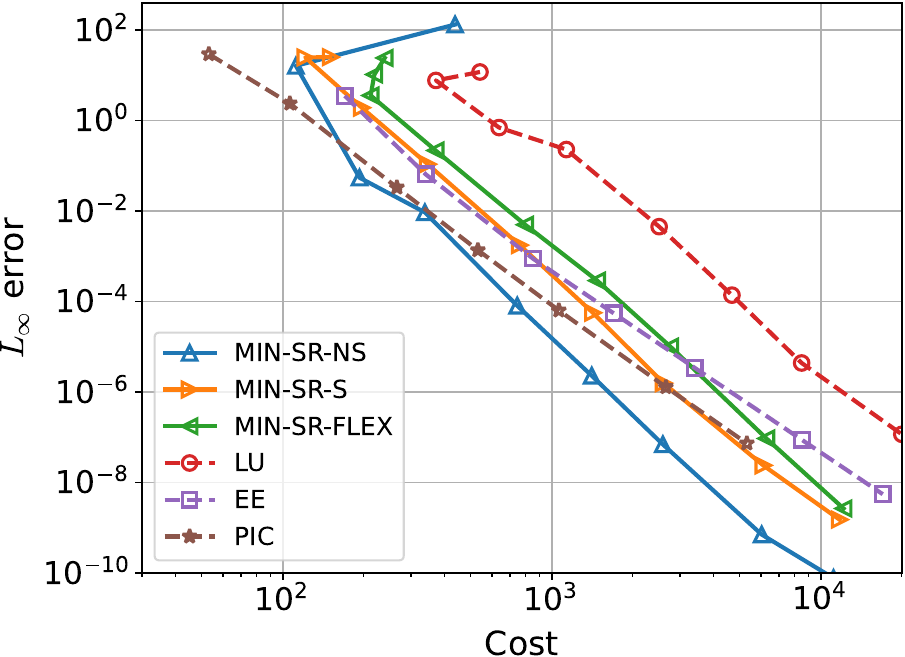}%
    \hfill%
    \includegraphics[width=0.48\linewidth]{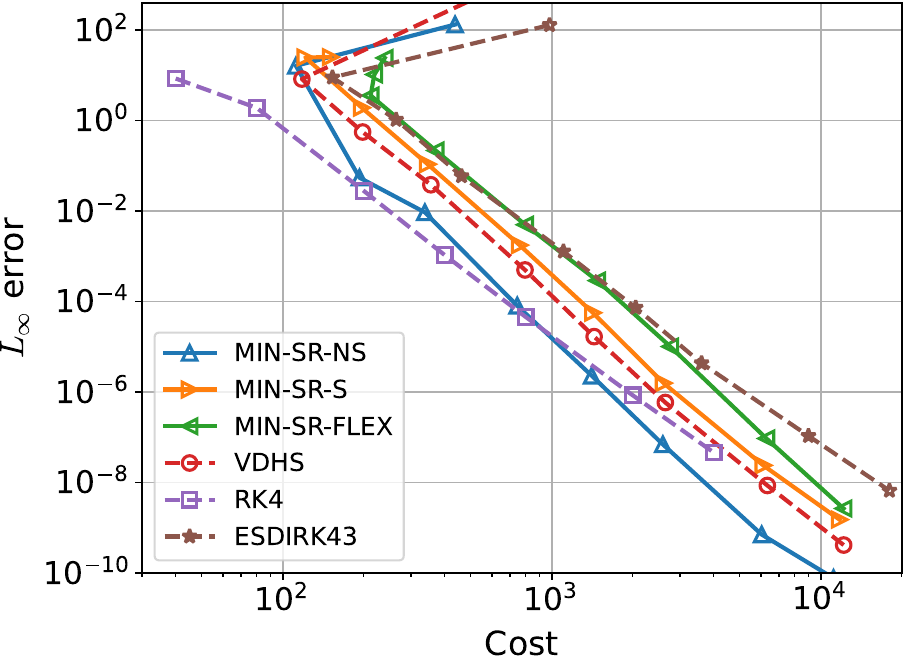}%
    \caption{Error vs.\ cost for SDC for the Lorenz problem using $M=4$ \RadauRight nodes and $K=4$ sweeps.
    Left: comparison with classical SDC preconditioners, right: comparison with efficient time integration methods from the literature and SDC with \HOUWENSOMMEIJER preconditioner.}
\end{figure}
Figure~\ref{fig:lorenzCost} shows error against modelled computational cost for \MINSRS and a variety of other SDC variants (left) as well as RKM and the \HOUWENSOMMEIJER preconditioner (right).
The parallel and PIC preconditioners significantly outperform classical SDC with explicit Euler or LU sweep.
As expected, the preconditioner for non-stiff problems \MINSRNS is the most efficient, although the stiff preconditioners \MINSRS and \MINSRFLEX remain competitive.
\MINSRNS also outperforms the \HOUWENSOMMEIJER preconditioner and ESDIRK43 RKM.
For errors above $10^{-4}$, explicit RKM4 is more efficient than \MINSRNS
but for errors below $10^{-6}$ \MINSRNS outperforms RKM4.
When increasing the number of sweeps to $K=5$, the advantage in efficiency of
\MINSRNS over RKM4 becomes more pronounced, see Figure~\ref{fig:lorenzCost2}.

\begin{figure}
    \label{fig:lorenzCost2}
    \centering
    \includegraphics[width=0.48\linewidth]{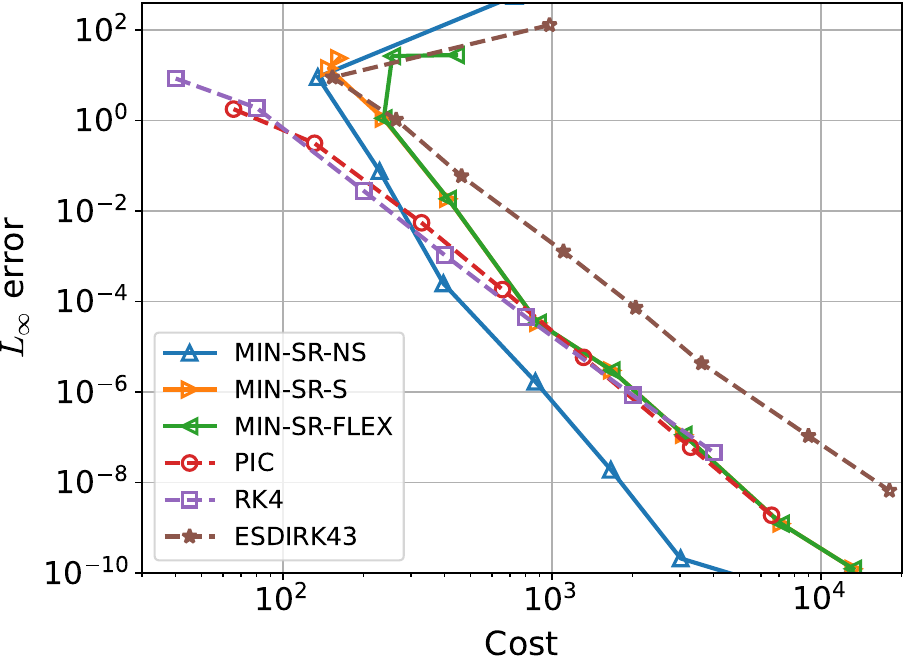}%
    \caption{Error vs.\ cost for SDC and Picard iteration for the Lorenz problem using $M=4$ \RadauRight nodes and $K=5$ sweeps in comparison to RKM4 and ESIDRK43.}
\end{figure}

\subsection{Prothero-Robinson problem}
\label{sec:protheroRobinson}

Our second test case is the stiff ODE by Prothero and Robinson~\cite{prothero1974stability}
\begin{equation}
    \label{eq:protheroRobinson}
    \frac{du}{dt} = \frac{u - g(t)}{\varepsilon} + \frac{dg}{dt}.
\end{equation}
The analytical solution for this ODE is $u(t)=g(t)$ and we set $g(t)=\cos(t)$ and $\varepsilon=10^{-3}$.
We also set a Newton tolerance of $10^{-12}$.

\begin{figure}
    \label{fig:protheroRobinsonK4}
    \includegraphics[width=0.48\linewidth]{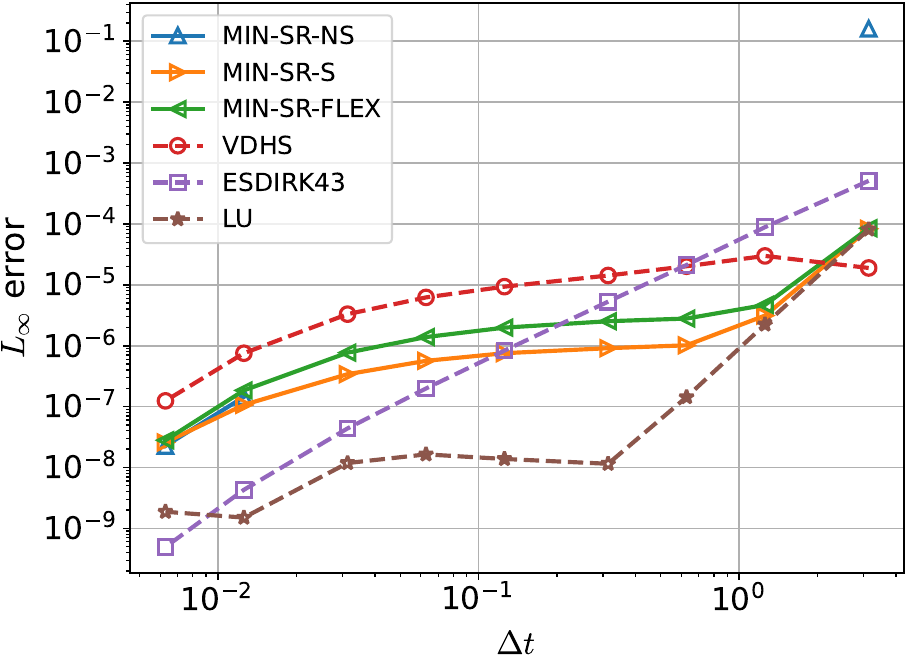}%
    \hfill%
    \includegraphics[width=0.48\linewidth]{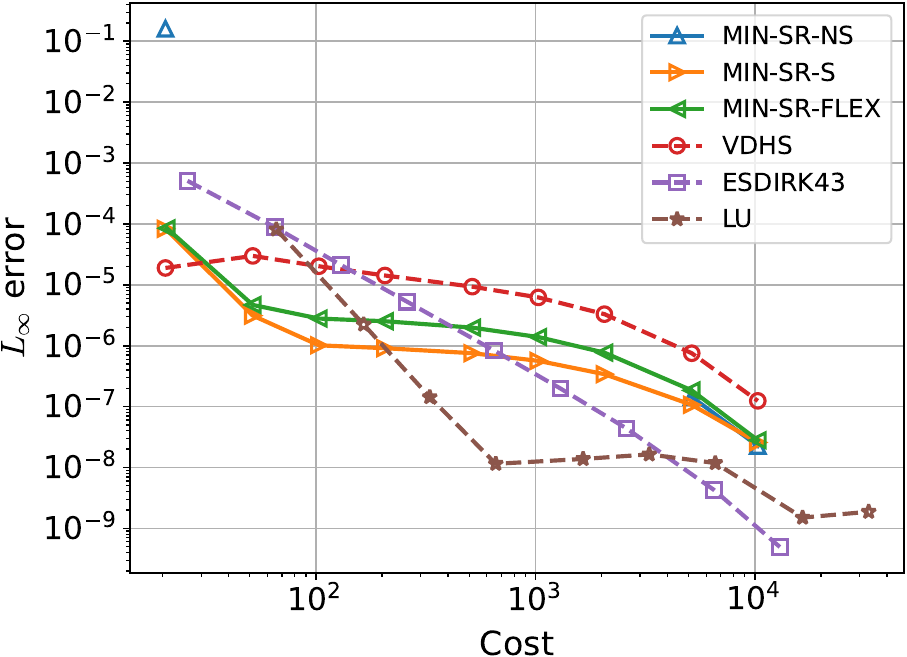}%
    \caption{Error vs.\ time step size for parallel SDC and classical time-integration schemes
    for the Prothero-Robinson problem with $\varepsilon=10^{-3}$.
    SDC uses $M=4$ \RadauRight nodes and $K=4$ sweeps.
    Left: error vs.\ time step, right: error vs.\ cost.}
\end{figure}
Figure~\ref{fig:protheroRobinsonK4} shows error versus time step size (left) and modelled computational cost (right) for our three parallel SDC methods, non-parallel \LU-SDC, parallel \HOUWENSOMMEIJER SDC and the implicit ESDIRK43 RKM~\cite{kennedy2016diagonally}.
All SDC variants use $K=4$ sweeps.
The parallel SDC variants all show a noticable range where the error does not decrease with time step size.
This is a known phenomenon for the Prothero-Robinson problem~\cite[Sec.~6.1]{weiser2015faster} and means that a very small time step is required to recover the theoretically expected convergence order.
While \MINSRS outperforms \LU-SDC in efficiency up to an error of around $10^{-6}$, its stalling convergence after results in \LU-SDC being more efficient for very high accuracies.

\begin{figure}
    \label{fig:protheroRobinsonK6}
    \includegraphics[width=0.48\linewidth]{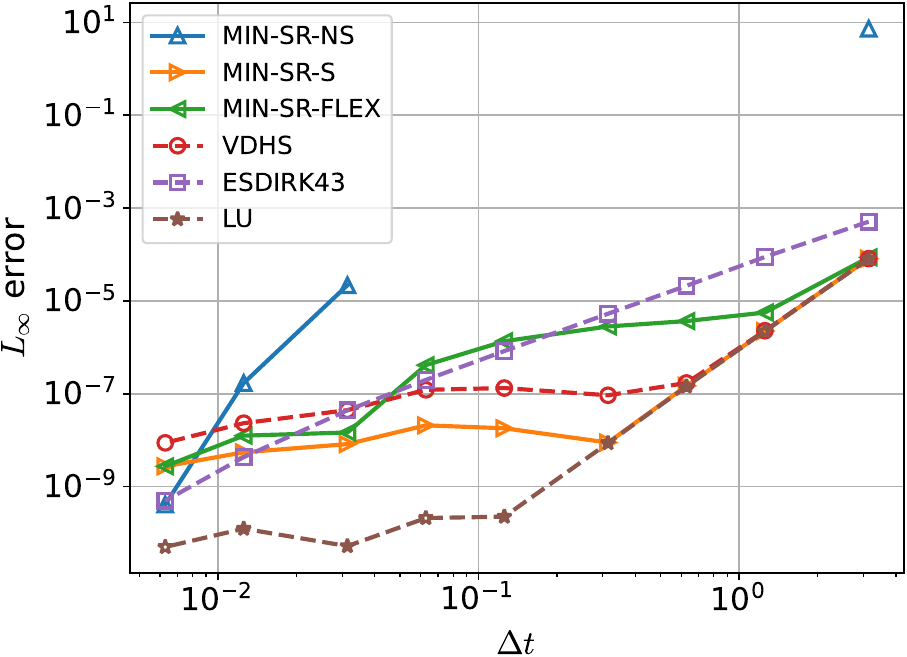}%
    \hfill%
    \includegraphics[width=0.48\linewidth]{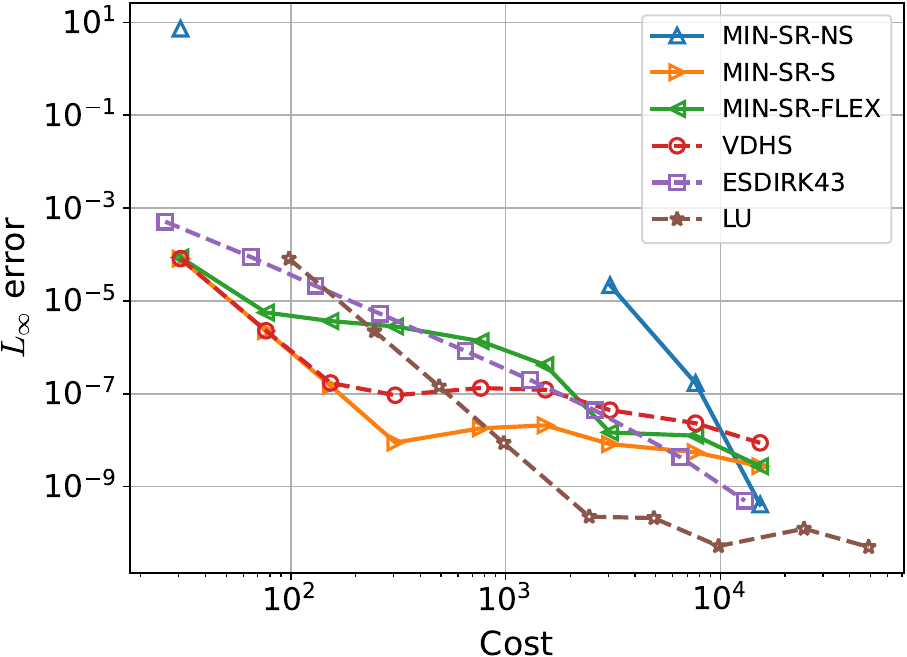}%
    \caption{Comparison of diagonal SDC and classical time-integration schemes
        on the Prothero-Robinson problem, using $\varepsilon=10^{-3}$.
        Each SDC configuration uses $M=4$ \RadauRight nodes and $K=6$ sweeps.
        Left : error vs.\ time step, right: error vs.\ cost.}
\end{figure}
If we increase the number of sweeps to $K=6$, the error level at which convergence stalls is reduced, see Figure~\ref{fig:protheroRobinsonK6}.
This also pushes down the error where \LU-SDC becomes more efficient than \MINSRS to around $10^{-8}$.
Nevertheless, in both cases \MINSRS will be an attractive integrator unless extremely tight accuracy is required.

\subsection{Allen-Cahn equation}
\label{sec:allenCahn}

As last test problem, we consider the one-dimen\-sional Allen-Cahn equation with driving force
\begin{equation}
    \frac{\partial u}{\partial t}
    = \frac{\partial^2 u}{\partial x^2}
        - \frac{2}{\varepsilon^2} u (1 - u) (1 - 2u) - 6 d_w u (1 - u),
    \quad x \in [-0.5, 0.5], \quad t \in [0, T].
\end{equation}
using inhomogeneous Dirichlet boundary conditions.
The exact solution is
\begin{equation}
    \label{eq:allenCahnSolution}
    u(x, t)= 0.5 \left[1 + \tanh\left(\frac{x - vt}{\sqrt{2}\varepsilon}\right)\right],
    \quad v = 3 \sqrt{2} \varepsilon d_w,
\end{equation}
and we use it to set the initial solution $u(x,0)$ and the boundary conditions
for $x= \pm 0.5$.
We set $T=50$ as simulation interval and parameters $\varepsilon=d_w=0.04$.
The spatial derivative is discretized with a second order finite-difference scheme on $2047$ grid points and we solve~\eqref{eq:systemSolve} in each Newton iteration with a
sparse linear solver from \texttt{scipy}.
Numerical experiments were run on one compute node of the \texttt{JUSUF} cluster (AMD EPYC 7742 2.25 GHz)
at Jülich Supercomputing Center, using a locally compiled version
of \texttt{Python=3.11.9} (\texttt{GCC=12.3.0}),
with \texttt{numpy=2.0.2}, \texttt{scipy=1.14.1},
and \texttt{mpi4py=4.0.0} wrapping \texttt{ParaStationMPI=5.9.2-1}.

Figure~\ref{fig:allenCahnAccuracy} shows the absolute error in the $L_2$ norm 
at $T$ versus the time step size. 
All methods converge to an error of around $2 \times 10^{-4}$, which corresponds
to the space discretization error for the chosen grid. 
The fastest converging method is SDC with LU, followed by ESDIRK43 and SDC with MIN-SR-FLEX preconditioners.

\begin{figure}[th]
    \label{fig:allenCahnAccuracy}
    \centering
    \includegraphics[width=0.48\linewidth]{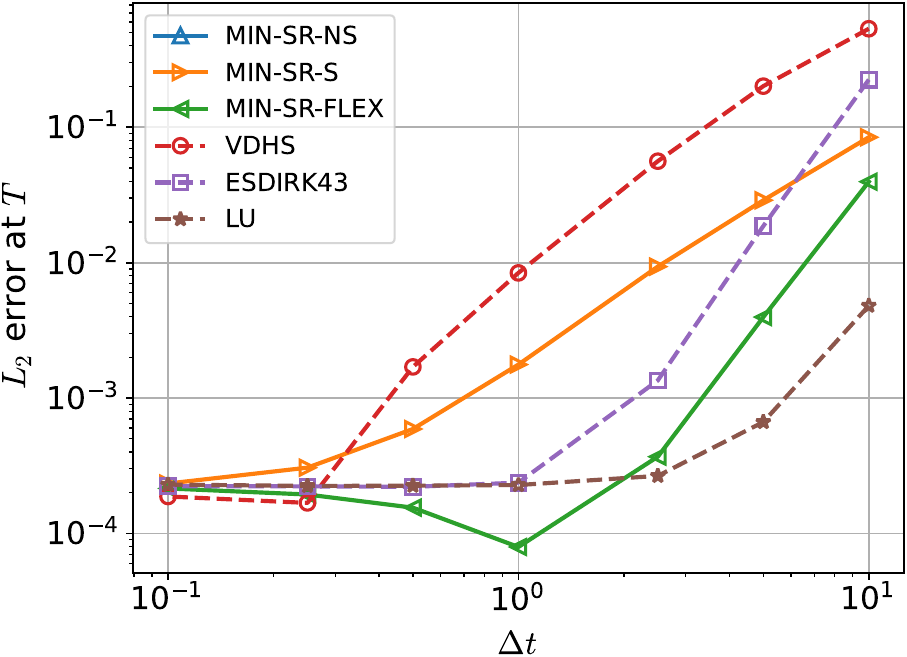}%
    \caption{Comparison of the accuracy of the diagonal SDC preconditioners and classical time-integration schemes for the Allen-Cahn equation. Each SDC configuration uses $M=4$ \RadauRight nodes and $K=4$ sweeps.}
\end{figure}

Figure~\ref{fig:allenCahnEfficiency} (left) shows the absolute error in the $L_2$ norm at $T$ versus modelled computational cost.
All methods converge to an error of around $2\cdot 10^{-4}$, which corresponds to the space discretization error for the chosen grid.
The most efficient methods is \MINSRFLEX parallel SDC, followed by ESDIRK43 and SDC with LU preconditioner.
 Figure~\ref{fig:allenCahnEfficiency} (right) shows the absolute $L_2$ error versus wall-clock time in seconds.
The plot looks very similar when using modelled cost, illustrating that our cost model gives a reasonable approximation of actual cost.
Again, \MINSRFLEX is the most efficient integrator except for very loose error tolerances of $10^{-1}$ and above where \MINSRS is faster.
Note that for SDC with \LU preconditioner to catch up to \MINSRFLEX in terms of performance for a given time step size, the efficiency of the parallel implementation would need to be
below $40\%$.
\begin{figure}[th]
    \label{fig:allenCahnEfficiency}
    \includegraphics[width=0.48\linewidth]{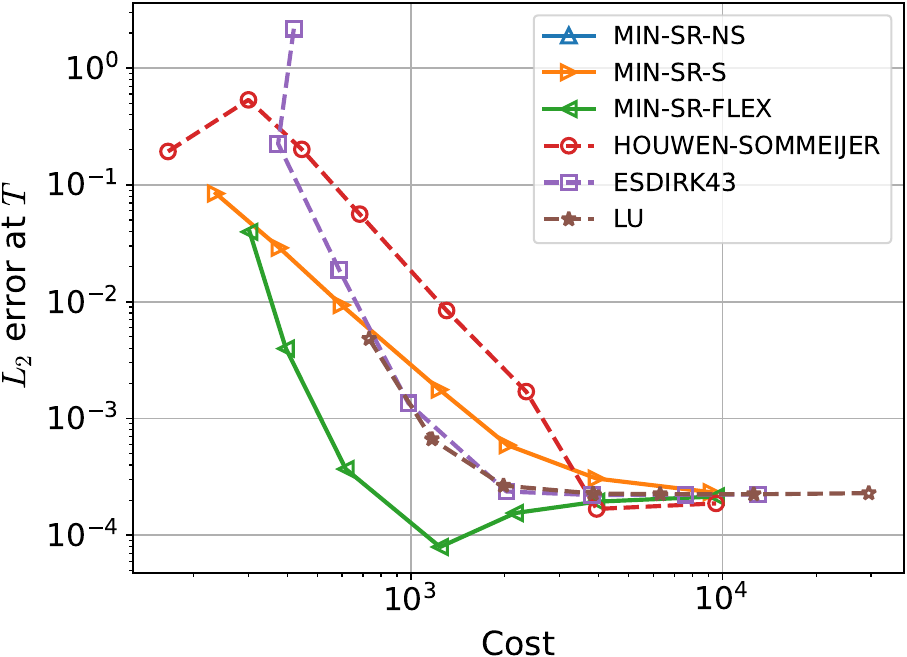}%
    \hfill%
    \includegraphics[width=0.48\linewidth]{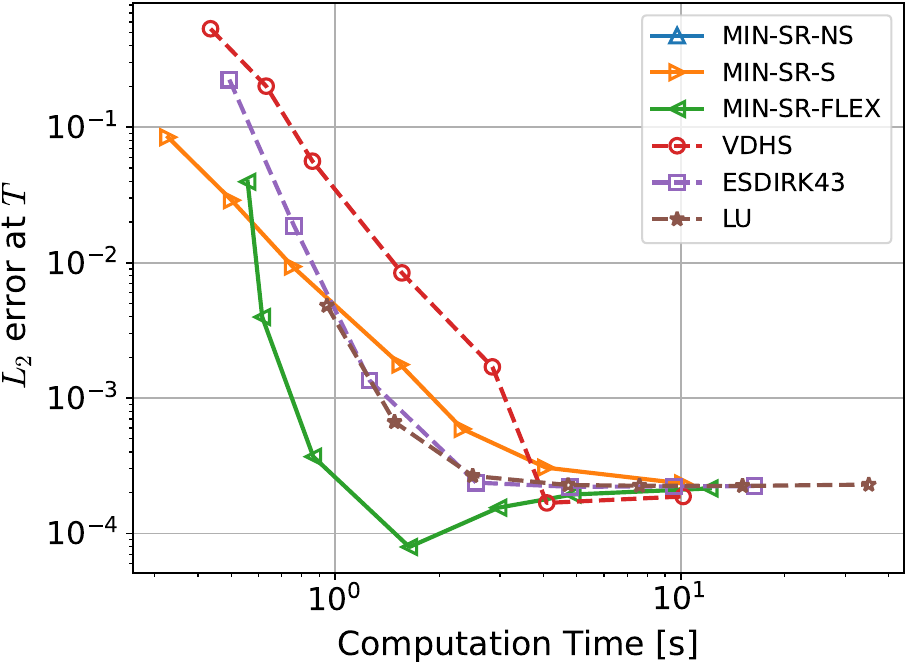}%
    \caption{Comparison of the efficiency of diagonal SDC preconditioners and classical time-integration schemes
        for the Allen-Cahn equation.
        Each SDC configuration uses $M=4$ \RadauRight nodes and $K=4$ sweeps.
        Left : error vs.\ modelled computational cost, right: error vs. computation time.}
\end{figure}

\section{Conclusions}
\label{sec:conclusions}
Using a diagonal preconditioner for spectral deferred corrections allows to exploit small-scale parallelization in time for a number of threads up to the number of quadrature nodes in the underlying collocation formula.
However, efficiency and stability of parallel SDC depends critically on the coefficients in the preconditioner.
We consider two minimization problems, one for stiff and one for non-stiff time-dependent
problems, that can be solved to find optimized parameter.
This allows us to propose three new sets of coefficients, \MINSRNS for non-stiff problems and
\MINSRS and \MINSRFLEX for stiff problems.
While we use numerical optimization to determine the \MINSRS diagonal coefficients, 
we can obtain the \MINSRNS and \MINSRFLEX coefficients analytically.
\MINSRFLEX is a non-stationary iteration where the preconditioner changes in every 
SDC sweep and is designed to generate a nilpotent error propagation matrix.

We demonstrate by numerical experiments that the three new variants of parallel SDC increase their order by at least one per sweep, similar to existing non-parallel SDC methods.
The variants designed for stiff problems have excellent stability properties and might well be $A$-stable, although we do not have a rigorous proof yet.
Stability regions of \MINSRFLEX in particular are very similar to those of the non-parallel \LU-SDC variant and much larger than those of the Iterated Implicit Runge-Kutta methods by Van der Houwen~\cite{houwen1991iterated}.
To model computational efficiency, we count the number of right hand side evaluations and Newton iterations for all schemes and assume $80\%$ parallel efficiency for the implementation of parallel SDC.
We compare error against computational effort of our three new parallel SDC variants against parallel SDC variants from the literature, explicit RKM4 and implicit ESDIRK43 for the non-stiff Lorenz system, the stiff Prothero-Robinson problem and the stiff, one-dimensional Allen-Cahn equation.
For all three test problems, the new parallel SDC variants can outperform existing parallel SDC variants, state-of-the-art serial SDC methods as well as RKM4 and ESDIRK43.
Wallclock time measurements for the Allen-Cahn problem show that our cost model closely tracks actual runtimes and that a parallel implementation of SDC can be more efficient than optimized serial SDC methods as well as Runge-Kutta methods.

\appendix
\section{Optimized diagonal coefficients for the stiff limit}
\label{ap:coeffs}

We repeat here the coefficients used for our numerical experiments, either numerically computed or obtained from the literature as well as the resulting spectral radius of the $\KS$ matrix.
The values are for $M=4$ \RadauRight nodes from a Legendre distribution.
\begin{align*}
    &\text{\HOUWENSOMMEIJER by Van der Houwen and Sommeijer~\cite{houwen1991iterated}}\\
        &\quad\vect{d}=[0.32049937,\; 0.08915379,\; 0.18173956,\; 0.2333628],
         \quad \rho(\KS)=0.025\\
    &\text{\texttt{MIN} by Speck~\cite{speck2018parallelizing}}\\
        &\quad\vect{d}=[0.17534868,\; 0.0619158,\; 0.1381934,\; 0.19617814],
         \quad \rho(\KS)=0.42\\
    &\text{\texttt{MIN3} by Speck et al.~\cite{speck2024pySDC}}\\
        &\quad\vect{d}=[0.31987868,\; 0.08887606,\; 0.18123663,\; 0.23273925],
        \quad \rho(\KS)=0.0081\\
    &\text{\MINSRS introduced in this paper}\\
        &\quad\vect{d}=[0.05363588,\; 0.18297728,\; 0.31493338,\; 0.38516736],
        \quad \rho(\KS)=0.00024
\end{align*}

%\section*{Acknowledgments}
%\todo{Who do we thank? Martin S beautified some proofs ...
%    nah, let's just thanks KIT for paying for Gaya's trip to HH. Can we say something like "Thanks KIT for funding the travels to Hamburg, %rare ones that I got the refund for?"}

\bibliographystyle{siamplain}
\bibliography{paper}

\begin{thebibliography}{10}

\bibitem{baumann2042pursing}
{\sc T.~Baumann, S.~Götschel, T.~Lunet, D.~Ruprecht, and R.~Speck}, {\em
  Adaptive time step selection for spectral deferred corrections}, 2024.
\newblock Submitted.

\bibitem{Burrage1993}
{\sc K.~Burrage}, {\em {Parallel methods for initial value problems}}, Applied
  Numerical Mathematics, 11 (1993), pp.~5--25,
  \url{https://doi.org/10.1016/0168-9274(93)90037-R}.

\bibitem{Butcher1976}
{\sc J.~C. Butcher}, {\em {On the implementation of implicit Runge--Kutta
  methods}}, {BIT Numerical Mathematics}, 16 (1976), pp.~237--240.

\bibitem{GayaThesis}
{\sc G.~Caklovic}, {\em {ParaDiag and Collocation Methods: Theory and
  Implementation}}, PhD thesis, Karlsruher Institut für Technologie (KIT),
  2023, \url{https://doi.org/10.5445/IR/1000164518}.

\bibitem{christlieb2009comments}
{\sc A.~Christlieb, B.~Ong, and J.-M. Qiu}, {\em {Comments on high-order
  integrators embedded within integral deferred correction methods}},
  Communications in Applied Mathematics and Computational Science, 4 (2009),
  pp.~27--56, \url{https://doi.org/10.2140/camcos.2009.4.27}.

\bibitem{dormand1980family}
{\sc J.~R. Dormand and P.~J. Prince}, {\em {A family of embedded Runge-Kutta
  formulae}}, Journal of computational and applied mathematics, 6 (1980),
  pp.~19--26.

\bibitem{dutt2000spectral}
{\sc A.~Dutt, L.~Greengard, and V.~Rokhlin}, {\em {Spectral Deferred Correction
  Methods for Ordinary Differential Equations}}, BIT Numerical Mathematics, 40
  (2000), pp.~241--266, \url{https://doi.org/10.1023/A:1022338906936}.

\bibitem{EmmettMinion2012}
{\sc M.~Emmett and M.~L. Minion}, {\em {Toward an Efficient Parallel in Time
  Method for Partial Differential Equations}}, Communications in Applied
  Mathematics and Computational Science, 7 (2012), pp.~105--132,
  \url{https://doi.org/10.2140/camcos.2012.7.105},
  \url{http://dx.doi.org/10.2140/camcos.2012.7.105}.

\bibitem{FalgoutEtAl2014_MGRIT}
{\sc R.~D. Falgout, S.~Friedhoff, T.~V. Kolev, S.~P. MacLachlan, and J.~B.
  Schroder}, {\em {Parallel time integration with multigrid}}, SIAM Journal on
  Scientific Computing, 36 (2014), pp.~C635--C661,
  \url{https://doi.org/10.1137/130944230},
  \url{http://dx.doi.org/10.1137/130944230}.

\bibitem{freese2024parallel}
{\sc P.~Freese, S.~G{\"o}tschel, T.~Lunet, D.~Ruprecht, and M.~Schreiber}, {\em
  Parallel performance of shared memory parallel spectral deferred
  corrections}, arXiv preprint arXiv:2403.20135,  (2024).

\bibitem{Gander2015_Review}
{\sc M.~J. Gander}, {\em {50 years of Time Parallel Time Integration}}, in
  Multiple Shooting and Time Domain Decomposition, Springer, 2015,
  \url{https://doi.org/10.1007/978-3-319-23321-5_3},
  \url{http://dx.doi.org/10.1007/978-3-319-23321-5_3}.

\bibitem{gautschi2004orthogonal}
{\sc W.~Gautschi}, {\em Orthogonal polynomials: computation and approximation},
  OUP Oxford, 2004.

\bibitem{Gear1988}
{\sc C.~Gear}, {\em {Parallel Methods for Ordinary Differential Equations}},
  CALCOLO, 25 (1988), pp.~1--20.

\bibitem{guibert2007parallel}
{\sc D.~Guibert and D.~Tromeur-Dervout}, {\em {Parallel deferred correction
  method for CFD problems}}, in Parallel Computational Fluid Dynamics 2006,
  Elsevier, 2007, pp.~131--138.

\bibitem{hairer1993nonStiff}
{\sc E.~Hairer, S.~P. N{\o}rsett, and G.~Wanner}, {\em {Solving Ordinary
  Differential Equations {I}: Nonstiff problems}}, Springer-Verlag Berlin
  Heidelberg, 2nd~ed., 1993, \url{https://doi.org/10.1007/978-3-540-78862-1}.

\bibitem{houwen1992embedded}
{\sc P.~J. V.~D. Houwen, B.~P. Sommeijer, and W.~Couzy}, {\em {Embedded
  Diagonally Implicit Runge-Kutta Algorithms on Parallel Computers}},
  Mathematics of Computation, 58 (1992), pp.~135--159.

\bibitem{huang2006accelerating}
{\sc J.~Huang, J.~Jia, and M.~Minion}, {\em {Accelerating the convergence of
  spectral deferred correction methods}}, Journal of Computational Physics, 214
  (2006), pp.~633--656, \url{https://doi.org/10.1016/j.jcp.2005.10.004}.

\bibitem{IserlesNorsett1990}
{\sc A.~Iserles and S.~P. N{\o}rsett}, {\em {On the theory of parallel
  {R}unge-{K}utta methods}}, IMA Journal of Numerical Analysis, 10 (1990),
  pp.~463--488, \url{https://doi.org/10.1093/imanum/10.4.463}.

\bibitem{Jackson1991}
{\sc K.~R. Jackson}, {\em {A survey of parallel numerical methods for initial
  value problems for ordinary differential equations}}, IEEE Transactions on
  Magnetics, 27 (1991), pp.~3792--3797,
  \url{https://doi.org/10.1109/20.104928}.

\bibitem{JacksonEtAl1995}
{\sc K.~R. Jackson and S.~P. N{\o}rsett}, {\em {The Potential for Parallelism
  in Runge–Kutta Methods. Part 1: RK Formulas in Standard Form}}, SIAM
  Journal on Numerical Analysis, 32 (1995), pp.~49--82,
  \url{https://doi.org/10.1137/0732002}.

\bibitem{kennedy2016diagonally}
{\sc C.~A. Kennedy and M.~H. Carpenter}, {\em {Diagonally implicit Runge-Kutta
  methods for ordinary differential equations. A review}}, tech. report, 2016.

\bibitem{LevequeEtAl2023}
{\sc S.~Leveque, L.~Bergamaschi, A.~Martinez, and J.~W. Pearson}, {\em {Fast
  Iterative Solver for the All-at-Once Runge--Kutta Discretization}}.
\newblock 2023.

\bibitem{Lie1987}
{\sc I.~Lie}, {\em {Some aspects of parallel Runge-Kutta methods}}, tech.
  report, Trondheim TU. Inst. Math., Trondheim, 1987,
  \url{https://cds.cern.ch/record/201368}.

\bibitem{LionsEtAl2001}
{\sc J.-L. Lions, Y.~Maday, and G.~Turinici}, {\em {A "parareal" in time
  discretization of {PDE}'s}}, Comptes Rendus de l'Académie des Sciences -
  Series I - Mathematics, 332 (2001), pp.~661--668,
  \url{https://doi.org/10.1016/S0764-4442(00)01793-6},
  \url{http://dx.doi.org/10.1016/S0764-4442(00)01793-6}.

\bibitem{MunchEtAl2023}
{\sc P.~Munch, I.~Dravins, M.~Kronbichler, and M.~Neytcheva}, {\em
  {Stage-Parallel Fully Implicit Runge{\textendash}Kutta Implementations with
  Optimal Multilevel Preconditioners at the Scaling Limit}}, {SIAM} Journal on
  Scientific Computing,  (2023), pp.~S71--S96,
  \url{https://doi.org/10.1137/22m1503270}.

\bibitem{nievergelt1964parallel}
{\sc J.~Nievergelt}, {\em {Parallel methods for integrating ordinary
  differential equations}}, Communications of the ACM, 7 (1964), pp.~731--733.

\bibitem{NorsettEtAl1989}
{\sc S.~P. N{\o}rsett and H.~H. Simonsen}, {\em {Aspects of parallel
  Runge-Kutta methods}}, in Numerical Methods for Ordinary Differential
  Equations, A.~Bellen, C.~W. Gear, and E.~Russo, eds., Berlin, Heidelberg,
  1989, Springer Berlin Heidelberg, pp.~103--117.

\bibitem{OngEtAl2016}
{\sc B.~W. Ong, R.~D. Haynes, and K.~Ladd}, {\em {Algorithm 965: {RIDC}
  Methods: A Family of Parallel Time Integrators}}, ACM Trans. Math. Softw., 43
  (2016), pp.~8:1--8:13, \url{https://doi.org/10.1145/2964377},
  \url{http://dx.doi.org/10.1145/2964377}.

\bibitem{OngEtAl2020}
{\sc B.~W. Ong and R.~J. Spiteri}, {\em {Deferred Correction Methods for
  Ordinary Differential Equations}}, Journal of Scientific Computing, 83
  (2020), \url{https://doi.org/10.1007/s10915-020-01235-8}.

\bibitem{OREL1993241}
{\sc B.~Orel}, {\em {Parallel Runge–Kutta methods with real eigenvalues}},
  Applied Numerical Mathematics, 11 (1993), pp.~241--250,
  \url{https://doi.org/https://doi.org/10.1016/0168-9274(93)90051-R}.

\bibitem{Pazner2017700}
{\sc W.~Pazner and P.-O. Persson}, {\em {Stage-parallel fully implicit
  Runge–Kutta solvers for discontinuous Galerkin fluid simulations}}, Journal
  of Computational Physics, 335 (2017), pp.~700--717,
  \url{https://doi.org/10.1016/j.jcp.2017.01.050}.

\bibitem{PETCU2001}
{\sc D.~Petcu}, {\em {Experiments with an ODE solver on a multiprocessor
  system}}, Computers \& Mathematics with Applications, 42 (2001),
  pp.~1189--1199.

\bibitem{prothero1974stability}
{\sc A.~Prothero and A.~Robinson}, {\em {On the stability and accuracy of
  one-step methods for solving stiff systems of ordinary differential
  equations}}, Mathematics of Computation, 28 (1974), pp.~145--162.

\bibitem{qu2016numerical}
{\sc W.~Qu, N.~Brandon, D.~Chen, J.~Huang, and T.~Kress}, {\em A numerical
  framework for integrating deferred correction methods to solve high order
  collocation formulations of odes}, Journal of Scientific Computing, 68
  (2016), pp.~484--520.

\bibitem{RauberEtAl1996}
{\sc T.~Rauber and G.~Rünger}, {\em {Parallel Implementations of Iterated
  Runge-Kutta Methods}}, The International Journal of Supercomputer
  Applications and High Performance Computing, 10 (1996), pp.~62--90,
  \url{https://doi.org/10.1177/109434209601000103}.

\bibitem{ruprecht2016spectral}
{\sc D.~Ruprecht and R.~Speck}, {\em {Spectral deferred corrections with
  fast-wave slow-wave splitting}}, SIAM Journal on Scientific Computing, 38
  (2016), pp.~A2535--A2557.

\bibitem{SchoebelEtAl2020}
{\sc R.~Schöbel and R.~Speck}, {\em {PFASST}-{ER}: combining the parallel full
  approximation scheme in space and time with parallelization across the
  method}, Computing and Visualization in Science, 23 (2020),
  \url{https://doi.org/10.1007/s00791-020-00330-5},
  \url{https://doi.org/10.1007/s00791-020-00330-5}.

\bibitem{SolodushkinEtAl2016}
{\sc S.~I. Solodushkin and I.~F. Iumanova}, {\em {Parallel numerical methods
  for ordinary differential equations: a survey}}, in CEUR Workshop
  Proceedings, vol.~1729, CEUR-WS, 2016, pp.~1--10.

\bibitem{Sommeijer1993}
{\sc B.~Sommeijer}, {\em {Parallel-iterated Runge-Kutta methods for stiff
  ordinary differential equations}}, {Journal of Computational and Applied
  Mathematics}, 45 (1993), pp.~151--168,
  \url{https://doi.org/10.1016/0377-0427(93)90271-C}.

\bibitem{speck2018parallelizing}
{\sc R.~Speck}, {\em {Parallelizing spectral deferred corrections across the
  method}}, {Comput. Visual Sci.}, 19 (2018), pp.~75--83,
  \url{https://doi.org/https://doi.org/10.1007/s00791-018-0298-x}.

\bibitem{speck2024pySDC}
{\sc R.~Speck, T.~Lunet, T.~Baumann, L.~Wimmer, and I.~Akramov}, {\em
  Parallel-in-time/pysdc}, Mar. 2024,
  \url{https://doi.org/10.5281/zenodo.13828395},
  \url{https://doi.org/10.5281/zenodo.13828395}.

\bibitem{VanderHouwen1993}
{\sc P.~van~der Houwen and B.~Sommeijer}, {\em {Analysis of parallel diagonally
  implicit iteration of Runge-Kutta methods}}, Applied Numerical Mathematics,
  11 (1993), pp.~169--188, \url{https://doi.org/10.1016/0168-9274(93)90047-U}.

\bibitem{VANDERHOUWEN1995309}
{\sc P.~{van der Houwen}, B.~Sommeijer, and W.~{van der Veen}}, {\em {Parallel
  iteration across the steps of high-order Runge-Kutta methods for nonstiff
  initial value problems}}, Journal of Computational and Applied Mathematics,
  60 (1995), pp.~309--329,
  \url{https://doi.org/https://doi.org/10.1016/0377-0427(94)00047-5}.

\bibitem{VanderHouwen1990}
{\sc P.~J. Van Der~Houwen and B.~P. Sommeijer}, {\em {Parallel iteration of
  high-order Runge-Kutta methods with stepsize control}}, Journal of
  Computational and Applied Mathematics, 29 (1990), pp.~111--127,
  \url{https://doi.org/10.1016/0377-0427(90)90200-J}.

\bibitem{houwen1991iterated}
{\sc P.~J. van~der Houwen and B.~P. Sommeijer}, {\em {Iterated Runge--Kutta
  Methods on Parallel Computers}}, SIAM Journal on Scientific and Statistical
  Computing, 12 (1991), pp.~1000--1028, \url{https://doi.org/10.1137/0912054}.

\bibitem{VanderHouwen1994}
{\sc P.~J. van~der Houwen, B.~P. Sommeijer, and W.~Van~der Veen}, {\em
  {Parallelism across the steps in iterated Runge-Kutta methods for stiff
  initial value problems}}, Numerical Algorithms, 8 (1994), pp.~293--312.

\bibitem{VanderveenEtAl1995}
{\sc W.~{van der Veen}, J.~{de Swart}, and P.~{van der Houwen}}, {\em
  {Convergence aspects of step-parallel iteration of Runge-Kutta methods}},
  Applied Numerical Mathematics, 18 (1995), pp.~397--411.

\bibitem{weiser2015faster}
{\sc M.~Weiser}, {\em {Faster SDC convergence on non-equidistant grids by DIRK
  sweeps}}, BIT Numerical Mathematics, 55 (2015), pp.~1219--1241,
  \url{https://doi.org/10.1007/s10543-014-0540-y}.

\end{thebibliography}

\end{document}